\theoremstyle{plain}
    \newtheorem{thm}{Theorem}[section]
    \newtheorem{prop}[thm]{Proposition}
    \newtheorem{lemma}[thm]{Lemma}
    \newtheorem{cor}[thm]{Corollary}
    \newtheorem{subsec}[thm]{}
\theoremstyle{definition}
    \newtheorem{defn}[thm]{Definition}
    \newtheorem{example}[thm]{Example}
\theoremstyle{remark}
        \newtheorem{remark}[thm]{Remark}
\newenvironment{myeq}[1][]
{\stepcounter{thm}\begin{equation}\tag{\thethm}{#1}}
{\end{equation}}
\newcommand{\mydiagram}[2][]
{\stepcounter{thm}\begin{equation}
     \tag{\thethm}{#1}\vcenter{\xymatrix{#2}}\end{equation}}
\newenvironment{mysubsection}[2][]
{\begin{subsec}\begin{upshape}\begin{bfseries}{#2.}
\end{bfseries}{#1}}
{\end{upshape}\end{subsec}}
\newenvironment{mysubsect}[2][]
{\begin{subsec}\begin{upshape}\begin{bfseries}{{#2}\vsn.}
\end{bfseries}{#1}}
{\end{upshape}\end{subsec}}
\newcommand{\supsect}[2]
{\vspace*{-5mm}\quad\\\begin{center}\textbf{{#1}}\vsm.~~~~\textbf{{#2}}\end{center}}
\newcommand{\w}[2][ ]{\ \ensuremath{#2}{#1}\ }
\newcommand{\ww}[2][ ]{\ \ensuremath{#2}{#1}}
\newcommand{\wh}{\ -- \ }
\newcommand{\wwh}{-- \ }
\newcommand{\wb}[2][ ]{\ (\ensuremath{#2}){#1}\ }
\newcommand{\wwb}[1]{\ (\ensuremath{#1})-}
\newcommand{\wref}[2][ ]{\ \eqref{#2}{#1}\ }
\newcommand{\hsp}{\hspace*{10 mm}}
\newcommand{\hsm}{\hspace*{2 mm}}
\newcommand{\vsm}{\vspace{2 mm}}
\newcommand{\vsn}{\vspace{1 mm}}
\newcommand{\hra}{\hookrightarrow}
\newcommand{\rest}[1]{\lvert_{#1}}
\newcommand{\epic}{\to\hspace{-5 mm}\to}
\newcommand{\clos}{\operatorname{cl}}
\newcommand{\diag}{\operatorname{diag}}
\newcommand{\Emb}{\operatorname{Emb}}
\newcommand{\eval}{\operatorname{ev}}
\newcommand{\oev}{\widehat{\eval}}
\newcommand{\Euc}[1]{\operatorname{Euc}^{#1}}
\newcommand{\fr}{\operatorname{fr}}
\newcommand{\Id}{\operatorname{Id}}
\newcommand{\Image}{\operatorname{Im}}
\newcommand{\Immers}{\operatorname{Imm}}
\newcommand{\imm}{\operatorname{im}}
\newcommand{\lk}{\operatorname{lk}}
\newcommand{\open}{\operatorname{op}}
\newcommand{\dpath}{d\sb{\operatorname{path}}}
\newcommand{\red}{\operatorname{re}}
\newcommand{\sign}{\operatorname{sgn}}
\newcommand{\SO}[1]{\operatorname{SO}({#1})}
\newcommand{\spn}{\operatorname{span}}
\newcommand{\Spin}[1]{\operatorname{Spin}({#1})}
\newcommand{\var}{\varepsilon}
\newcommand{\ba}{{\mathbf a}}
\newcommand{\bb}{{\mathbf b}}
\newcommand{\vb}{\vec{\bb}}
\newcommand{\be}{\mathbf{e}}
\newcommand{\ve}{\vec{\be}}
\newcommand{\vel}{\vec{\ell}}
\newcommand{\bbf}{\mathbf{f}}
\newcommand{\bg}{\mathbf{g}}
\newcommand{\bh}{\mathbf{h}}
\newcommand{\bn}{\mathbf{n}}
\newcommand{\vn}{\vec{\bn}}
\newcommand{\bp}{\mathbf{p}}
\newcommand{\vp}{\vec{\bp}}
\newcommand{\bq}{\mathbf{q}}
\newcommand{\vq}{\vec{\bq}}
\newcommand{\bS}[1]{{\mathbf S}\sp{#1}}
\newcommand{\bu}{\mathbf{u}}
\newcommand{\vu}{\vec{\bu}}
\newcommand{\bv}{\mathbf{v}}
\newcommand{\vv}{\vec{\bv}}
\newcommand{\bx}{\mathbf{x}}
\newcommand{\bbx}{\bar{\bx}}
\newcommand{\hx}{\hat{\bx}}
\newcommand{\vx}{\vec{\bx}}
\newcommand{\bw}{\mathbf{w}}
\newcommand{\vw}{\vec{\bw}}
\newcommand{\by}{\mathbf{y}}
\newcommand{\bz}{\mathbf{z}}
\newcommand{\vz}{\vec{\bz}}
\newcommand{\bze}{\mathbf{0}}
\newcommand{\vze}{\vec{\bze}}
\newcommand{\HH}{{\mathbb H}}
\newcommand{\NN}{{\mathbb N}}
\newcommand{\QQ}{{\mathbb Q}}
\newcommand{\R}{{\mathbb R}}
\newcommand{\RR}[1]{\R\sp{#1}}
\newcommand{\ZZ}{{\mathbb Z}}
\newcommand{\C}{{\EuScript C}}
\newcommand{\cE}{{\mathcal E}}
\newcommand{\hE}[1]{\widehat{\cE}\sb{#1}}
\newcommand{\F}{{\EuScript F}}
\newcommand{\hF}{\widetilde{\F}}
\newcommand{\G}{{\EuScript G}}
\newcommand{\htG}{\widetilde{\G}}
\newcommand{\hpi}{\widetilde{\pi}}
\newcommand{\hPhi}{\widetilde{\Phi}}
\newcommand{\eH}{{\mathcal H}}
\newcommand{\eL}{{\EuScript L}}
\newcommand{\eLn}[1]{\eL\sp{\times{#1}}}
\newcommand{\eLz}[1]{\eLn{#1}\sb{\bze}}
\newcommand{\nL}{{\EuScript N}}
\newcommand{\cP}{{\mathcal P}}
\newcommand{\T}{{\EuScript T}}
\newcommand{\Gn}[1]{\Gamma\sb{#1}}
\newcommand{\Gcl}[1]{\Gamma\sp{#1}\sb{\clos}}
\newcommand{\Gop}[1]{\Gamma\sp{#1}\sb{\open}}
\newcommand{\hGop}[1]{\widehat{\Gamma}\sp{#1}\sb{\open}}
\newcommand{\hC}{\widehat{\C}}
\newcommand{\tC}{\underline{\C}}
\newcommand{\Cs}{\hC\sb{\ast}}
\newcommand{\CG}{\hC(\Gamma)}
\newcommand{\tCG}{\tC(\Gamma)}
\newcommand{\CGn}[1]{\hC(\Gn{#1})}
\newcommand{\CsG}{\Cs(\Gamma)}
\newcommand{\CsGn}[1]{\Cs(\Gn{#1})}
\newcommand{\CsrG}{\Cs\sp{\red}(\Gamma)}
\newcommand{\CsrGn}[1]{\Cs\sp{\red}(\Gn{#1})}
\newcommand{\CiG}{\C\,\sp{\imm}(\Gamma)}
\newcommand{\CrGn}[1]{\hC\,\sp{\red}(\Gn{#1})}
\newcommand{\CsrD}{\Cs\sp{\red}(\Delta)}
\newcommand{\Eml}[2]{\Emb\sb{#1}({#2})}
\newcommand{\Ems}[1]{\Eml{\ast}{#1}}
\newcommand{\Emsr}[1]{\Emb\sp{\red}\sb{\ast}({#1})}
\newcommand{\Emr}[1]{\Emb\sp{\red}({#1})}
\newcommand{\hEmb}{\widehat{\Emb}}
\newcommand{\hEml}[2]{\hEmb\sb{#1}({#2})}
\newcommand{\hEms}[1]{\hEml{\ast}{#1}}
\newcommand{\hEmsr}[1]{\hEmb\sp{\red}\sb{\ast}({#1})}
\newcommand{\hEmr}[1]{\hEmb\sp{\red}({#1})}
\newcommand{\tEmb}{\underline{\Emb}}
\newcommand{\tpi}{\underline{\pi}}
\newcommand{\Na}[1]{\C({#1})}
\newcommand{\NaG}{\Na{\Gamma}}
\newcommand{\NaGn}[1]{\Na{\Gn{#1}}}
\newcommand{\Ns}[1]{\C\sb{\ast}({#1})}
\newcommand{\NsG}{\Ns{\Gamma}}
\newcommand{\Nr}[1]{\C\sp{\red}({#1})}
\newcommand{\NrGn}[1]{\Nr{\Gn{#1}}}
\newcommand{\Nsr}[1]{\C\sp{\red}\sb{\ast}({#1})}
\newcommand{\NsrG}{\Nsr{\Gamma}}
\newcommand{\NsrGn}[1]{\Nsr{\Gn{#1}}}
\newcommand{\oNsr}[1]{\overline{\C}\sp{\red}\sb{\ast}({#1})}
\newcommand{\oNsrG}{\oNsr{\Gamma}}
\newcommand{\TG}{\T\sb{\Gamma}}
\newcommand{\TGn}[1]{\T\sb{\Gn{#1}}}
\newcommand{\up}[1]{\sp{({#1})}}
\newcommand{\down}[1]{\sb{({#1})}}
\begin{document}
\title{Configuration spaces of spatial linkages: Taking Collisions Into Account }
\author{David Blanc}
\address{Dept.\ of Mathematics, U. Haifa, 3498838 Haifa, Israel}
\email{blanc@math.haifa.ac.il}
\author{Nir Shvalb}
\address{Dept.\ of Industrial Engineering, Dept.\ of Mechanical Engineering Ariel U., 4076113
Ariel, Israel}
\email{nirsh@ariel.ac.il}
\date{\today}
\subjclass{Primary 70G40; Secondary 57R45, 70B15}
\keywords{mechanism, linkage, robotics, configuration space}
\begin{abstract}
	We construct a completed version \w{\CG} of the configuration space of a linkage
	$\Gamma$ in \w[,]{\RR{3}} which takes into account the ways one link can touch
	another. We also describe a simplified version \w{\tCG} which is a blow-up of the
	space of immersions of $\Gamma$ in \w[.]{\RR{3}} A number of simple detailed
	examples are given.
\end{abstract}
\maketitle

%
%
\section{Introduction}
\label{cint}

A \emph{linkage} is a collection of rigid bars, or links, attached to
each other at their vertices, with a variety of possible joints
(fixed, spherical, rotational, and so on). These play a central role in
the field of robotics, in both its mathematical and engineering aspects:
see \cite{MerlP,SeliG,TsaiR,FarbT}.

Such a linkage $\Gamma$, thought of as a metric graph, can be
embedded in an ambient Euclidean space \w{\RR{d}} in various ways,
called \emph{configurations} of $\Gamma$. The space \w{\NaG} of all
such configurations has a natural topology and differentiable
structure \wh see \cite{HallK} and Section \ref{ccs} below.

Such configuration spaces have been studied extensively, mostly for simple
closed or open chains (cf.\ \cite{FTYuzvT,GotRF,HKnutC,JSteiC,KMillM,MTrinG};
but see \cite{HolcM,KTsuC,OHaraM,SSB}). In  the plane, the convention is that
links freely slide over each other, so that a configuration is determined solely
by the locations of the joints. This convention is usually extended to spaces of
polygons in \w{\RR{3}} (see, e.g., \cite{KMillS}), so they no longer provide
realistic models of linkages (since in this model the links can pass through each
other freely). Alternatively, some authors have studies spaces of \emph{embeddings}
of sets of disjoint lines in \w[,]{\RR{3}} for which the issue does
not arise (cf.\ \cite{CEGSStolL,DViroC,ViroT}).

The goal of this paper is to address this question for spatial linkages, by
constructing a model taking into account how different bars touch each other.
We still use a simplified mathematical model, in which the links
have no thickness, and so on. Our starting point is the space \w{\NaG} of
\emph{embeddings} of $\Gamma$ in \w[.]{\RR{3}} If we allow the links to intersect,
we obtain the larger space \w{\CiG} of \emph{immersions} of $\Gamma$ in
\w[.]{\RR{3}} However, \w{\CiG} disregards the fact that in reality two bars touch
each other on one side or the other. To take this into account, we construct the
\emph{completed} configuration space \w{\CG} from \w{\NaG} by completing it 
with respect to a suitable metric. The new points of \w{\CG\setminus\NaG} are called
\emph{virtual configurations}: they correspond to immersed configurations decorated
with an additional (discrete) set of labels. See Section \ref{cccs}.

Unfortunately, the completed configuration space \w{\CG} is very difficult to
describe in most cases. Therefore, we also construct a simplified
version, called the \emph{blow-up}, denoted by \w[.]{\tCG} Here the
labelling is described explicitly by a finite set of invariants
(see Proposition \ref{pcovering} below), called \emph{linking numbers},
which determine the mutual position of two infinitely thin tangent cylinders
in space (cf.\ \cite{VassK}). See Section \ref{sbusc}.

One advantage of the blow-up is that its set of singularities can be filtered
in various ways, and the simpler types can be described explicitly. See
Section \ref{clblow}.

The relation between these spaces can be described as follows:
\mydiagram[\label{eqsimplconf}]{
\NaG ~\ar@{^{(}->}[rr] && \CG \ar@{->>}[rr] && \tCG \ar@{->>}[rr] && \CiG~.
}

The second half of the paper is devoted to the study of a number of examples:

\begin{enumerate}
\renewcommand{\labelenumi}{(\alph{enumi})~}
\item The simplest ``linkage'' we describe consists of two oriented lines
in \w[.]{\RR{3}} In this case \w[,]{\CG=\tCG}  and
the configuration space is described fully in \S \ref{cpgint}.\textbf{A}.
\item More generally, in Section \ref{cdpoints} we consider a collection of $n$ 
oriented lines in space, and show that its completed configuration space is 
homotopy equivalent to that of a linkage consisting of $n$ lines touching 
at the origin.

We give a full cell structure for \w{\CG} when \w{n=3} in Section \ref{ctml}.
\item Finally, the case of a closed quadrilateral chain is analyzed in
\S \ref{ccis}.\textbf{A} and that of an open chain of length three in
\S \ref{ccis}.\textbf{B}.
\end{enumerate}

%
%
\section{Configuration spaces}
\label{ccs}

Any embedding of a linkage in a (fixed) ambient Euclidean space \w{\RR{d}}
is determined by the positions of its vertices, but not all embeddings of its
vertices determine a legal embedding of the linkage. To make
this precise, we require the following:

\begin{defn}\label{dcspace}
An \emph{linkage type} is a graph
\w[,]{\TG=(V,E)} determined by a set $V$ of $N$ vertices and a set
\w{E\subseteq V^{2}} of $k$ edges (between distinct vertices). We assume there are
no isolated vertices.  A specific \emph{linkage} \w{\Gamma=(\TG,\vel)}  of type
\w{\TG} is determined by
a \emph{length vector} \w[,]{\vel:=(\ell_{1},\dotsc,\ell_{k})\in\RR{E}_{+}}
specifying the length \w{\ell_{i}>0} of each edge \w{(u_{i},v_{i})} in
$E$ \wb[.]{i=1,\dotsc,k} This \w{\vel} is required to satisfy the triangle
inequality where appropriate. We call an edge with a specified length a \emph{link},
(or bar) of the linkage $\Gamma$, and the vertices of $\Gamma$ are also known as
\emph{joints}.

An \emph{embedding} of \w{\TG} in the Euclidean space \w{\RR{d}} is an injective map
\w{\bx:V\to\RR{d}} such that the open intervals
\w{(\bx(u_{i}),\bx(v_{i}))} and \w{(\bx(u_{j}),\bx(v_{j}))} in \w{\RR{d}} are
disjoint if the edges \w{(u_{i},v_{i})} and \w{(u_{j},v_{j})} are distinct in $E$,
and the corresponding closed intervals \w{[\bx(u_{i}),\bx(v_{i})]} and
\w{[\bx(u_{j}),\bx(v_{j})]} intersect only at the common vertices.
The space of all such embeddings is denoted by \w[;]{\Emb(\TG)} it is an open
subset of \w[.]{(\RR{d})^{V}}

We have a \emph{moduli function} \w[,]{\lambda_{\TG}:(\RR{d})\sp{V}\to[0,\infty)}
written \w[,]{\bx\mapsto\lambda_{\TG}(\vx):E\to[0,\infty)} with
\w{(\lambda_{\TG}(\bx))(u_{i},v_{i}):=\|\bx(u_{i})-\bx(v_{i})\|}
for \w[.]{(u_{i},v_{i})\in E} We think of \w{\Lambda:=\Image(\lambda\sb{\TG})} as the
\emph{moduli space} for \w[.]{\TG}

The \emph{immersion space} of the linkage \w{\Gamma=(\TG,\vel)}
is the subspace \w{\CiG:=\lambda_{\TG}^{-1}(\vel)} of \w[.]{(\RR{3})\sp{V}}
A point \w{\bx\in\CiG} is called an \emph{immersed configuration} of $\Gamma$:
it is determined by the condition
\begin{myeq}\label{eqconfig}
\|\bx(u_{i})-\bx(v_{i})\|=\ell_{i}\hsp\text{for each edge}\hsm (u_{i},v_{i})\hsm
\text{in}~E~.
\end{myeq}

Finally, the \emph{configuration space} of the linkage
\w{\Gamma=(\TG,\vel)} is the subspace \w{\NaG:=\CiG\cap\Emb(\TG)}
of \w[.]{\Emb(\TG)} A point \w{\bx\in\NaG} is called an (embedded)
\emph{configuration} of $\Gamma$. Since \w{\Emb(\TG)} is open in
\w[,]{(\RR{d})\sp{V}} \w{\NaG} is open in \w[.]{\CiG}
\end{defn}

\begin{remark}\label{rlinkage}
We may also consider linkage types \w{\TG} containing lines (or half lines)
as ``generalized edges'' \w[:]{e\in E} in this case we add two (or one) new
vertices of $e$ to $V$, in order to ensure that any embedding of \w{\TG} in
\w{\RR{d}} is uniquely determined by the corresponding vertex embedding
\w[.]{\bx:V\to\RR{d}}
\end{remark}

\begin{example}\label{egchain}
The simplest kind of connected linkage is that of $k$-\emph{chain},
with $k$ edges (of lengths \w[),]{\ell_{1},\dotsc,\ell_{k}} in which all
nodes of degree $\leq 2$.

If all nodes are of degree $2$, it is called a \emph{closed} chain, and denoted
by \w[;]{\Gcl{k}} otherwise, it is an \emph{open} chain, denoted by \w[.]{\Gop{k}}
\end{example}

\begin{mysubsection}{Isometries acting on configuration spaces}\label{srestr}
The group \w{\Euc{d}} of isometries of the Euclidean space
\w{\RR{d}} acts on the spaces \w{\Emb(\TG)} and \w[,]{\NaG} and
the action is generally free, but certain configurations (e.g,
those contained in a proper subspace $W$ of \w[)]{\RR{d}} may be
fixed by certain transformations (e.g., those fixing $W$) (see \cite{KamiT}).

Note in particular that we may choose any fixed node \w{u_{0}} as
the \emph{base-point} of $\Gamma$, and the action of the
translation subgroup \w{T\cong\RR{d}} of \w{\Euc{d}} on \w{\bx(u_{0})}
\emph{is} free.  Therefore, the action
of $T$ on \w{\NaG} is also free. We call the quotient space
\w{\Ems{\TG}:=\Emb(\TG)/T} the \emph{pointed space of embeddings}
for \w[,]{\TG} and \w{\NsG:=\NaG/T} the \emph{pointed configuration space}
for $\Gamma$. Both quotient maps
have canonical sections, and in fact \w{\Emb(\TG)\cong\Ems{\TG}\times\RR{d}} and
\w[.]{\NaG\cong\NsG\times\RR{d}}
A pointed configuration (i.e., an element \w{[\bx]} of \w[)]{\NsG} is equivalent
to an ordinary configuration $\bx$ expressed in terms of a coordinate
frame for \w{\RR{d}} with \w{\bx(u_{0})=\bze} at the origin.

If we also choose a fixed edge \w{(u_{0},v_{0})} in \w{\TG}
starting at \w[,]{u_{0}} we obtain a smooth map
\w{p:\Emb(\TG)\to S^{d-1}} which assigns to a
configuration $\bx$ the direction of the vector from \w{\bx(u_{0})} to
\w[.]{\bx(v_{0})} The fiber \w{\Emsr{\TG}} of $p$ at \w{\ve_{1}\in S^{d-1}}
will be called the  \emph{reduced space of embeddings} of \w[,]{\TG} and the fiber
\w{\NsrG} of \w{p\rest{\NsG}} at \w{\ve\sb{1}} will be called the
\emph{reduced configuration space} of $\Gamma$. Note that the bundles
\w{\Emb(\TG)\to S\sp{d-1}} and \w{\NsG\to S\sp{d-1}} are locally trivial.
\end{mysubsection}

%
%
\section{Virtual configurations}
\label{cccs}

The space \w{\CiG} of immersed configurations can be used as a simplified model
for the space of all possible configurations of $\Gamma$.
However, this is not a very good approximation to the behavior of linkages in
$3$-dimensional space. We now provide a more realistic (though still simplified)
approach, as follows:

\begin{defn}\label{dpathemtr}
Note that since \w{j:\Emb(\TG)\to(\RR{3})\sp{V}} is an embedding into
a manifold, it has a \emph{path metric}: for any two functions
\w{\vx,\vx':V\to\RR{d}} we let \w{\delta'(\vx,\vx')} denote the infimum of the
lengths of the rectifiable paths from $\vx$ to \w{\vx'} in \w{\Emb(\TG)} (and
\w{\delta'(\vx,\vx'):=\infty} if there is no such path). We then let \
\w[.]{\dpath(\vx,\vx'):=\min\{\delta'(\vx,\vx'),1\}}
This is clearly a metric, which is topologically equivalent to the Euclidean
metric on \w{\Emb(\TG)} inherited from \w{(\RR{3})\sp{V}}
(cf.\ \cite[Lemma 6.2]{LeeRM}).  The same is true of the metric \w{\dpath}
restricted to the subspace \w{\NaG} (compare \cite{RRimoI}).
\end{defn}

\begin{remark}\label{rsmooth}
Since any continuous path \w{\gamma:[0,1]\to\Emb(\TG)} has an
$\epsilon$-neighborhood of its image still contained in \w[,]{\Emb(\TG)\subseteq(\RR{d})\sp{V}} by the Stone-Weierstrass Theorem
(cf.\ \cite[\S 3.7]{FrieM}) we can approximate $\gamma$ by a smooth
(even polynomial) path $\hat{\gamma}$. Thus we may assume that all paths
between configurations used to define \w{\dpath} are in fact smooth.
\end{remark}

\begin{defn}\label{dcemb}
We define the \emph{completed space of embeddings} of \w{\TG} to be the
completion \w{\hEmb(\TG)} of \w{\Emb(\TG)} with respect to the metric \w{\dpath}
(cf.\ \cite[Theorem 43.7]{MunkrTF}). The \emph{completed configuration space}
\w{\CG} of a linkage $\Gamma$ is similarly defined to be the completion of the
embedding configuration spaces \w{\NaG} with respect to \w[.]{\dpath\rest{\NaG}}
The new points in \w{\CG\setminus\NaG} will be called
\emph{virtual configurations}: they correspond to actual immersions of $\Gamma$
in \w{\RR{d}} in which (infinitely thin) links are allowed to touch,
``remembering'' on which side this happens.

The spaces we have defined so far fit into a commutative diagram as follows:

\mydiagram[\label{eqrelconfsp}]{
\NsrG \ar@{^{(}->}[r] \ar@{^{(}->}[d] & \NsG \ar@{^{(}->}[r] \ar@{^{(}->}[d] &
\NaG \ar@{^{(}->}[r] \ar@{^{(}->}[d] \ar@/_1.3pc/[l] & \CG \ar@{->>}[r] \ar[d] &
\CiG \ar@{^{(}->}[d] \\
\Emsr{\TG} \ar@{^{(}->}[r] & \Ems{\TG} \ar@{^{(}->}[r] &
\Emb(\TG) \ar@{^{(}->}[r] \ar@/^1.0pc/[l] & \hEmb(\TG) \ar@{->>}[r] &
(\RR{3})\sp{V}.
}

\vsm\quad

The pointed and reduced versions \w[,]{\hEms{\TG}} \w[,]{\hEmsr{\TG}} \w[,]{\CsG)}
and \w[,]{\CsrG} are defined as in \S \ref{srestr}, and fit into a suitable
extension of \wref[.]{eqrelconfsp}

Note that the moduli function \w{\lambda_{\TG}:\Emb(V)\to\RR{E}} of \S \ref{dcspace}
extends to \w[,]{\hat{\lambda}\sb{\TG}:\hEmb(\TG)\to\RR{E}} and in fact \w{\CG} is
just the pre-image \w{\hat{\lambda}\sb{\TG}(\vel)} for the appropriate vector of
lengths $\vel$.
\end{defn}

\begin{remark}\label{rcompletion}
Even though the metric \w{\dpath} is topologically equivalent to the Euclidean
metric \w{d_{2}} on \w[,]{\Emb(\TG)} its completion with respect to the latter
is simply \w[,]{(\RR{d})^{V}} so the corresponding completion of \w{\NaG} is
the space of immersed configurations \w{\CiG}  of \S \ref{dcspace}. In fact,
from the properties of the completion we deduce:
\end{remark}

\begin{lemma}\label{lgimmerse}
For any linkage $\Gamma$ of type \w[,]{\TG} there is a continuous map
\w[.]{q:\CG\to\CiG}
\end{lemma}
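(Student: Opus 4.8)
The plan is to obtain $q$ from the universal property of metric completions. Concretely, I would show that the inclusion $\NaG \hookrightarrow \CiG$ is uniformly continuous when the source carries the metric $\dpath$ and the target carries the Euclidean metric $d_2$ inherited from $(\RR{3})^V$, observe that $\CiG$ is $d_2$-complete, and then extend this map over the $\dpath$-completion $\CG$ of $\NaG$.

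First I would compare the two metrics on $\NaG$. Writing $\delta'$ for the untruncated path metric of Definition \ref{dpathemtr}, I claim that $d_2(\vx,\vx') \le \delta'(\vx,\vx')$ for all $\vx,\vx' \in \NaG$. Indeed, any rectifiable path from $\vx$ to $\vx'$ lying in $\NaG$ is in particular a path in the ambient space $(\RR{3})^V$, where the straight-line segment is the shortest path; hence every competitor in the infimum defining $\delta'$ has Euclidean length at least $d_2(\vx,\vx')$. Since $\dpath = \min\{\delta',1\}$, on the region where $\dpath(\vx,\vx') < 1$ we have $\dpath = \delta' \ge d_2$, so the inclusion $\iota \colon (\NaG,\dpath) \to (\CiG, d_2)$ is $1$-Lipschitz there. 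The truncation at $1$ only affects large distances and is irrelevant to local behavior, so $\iota$ is uniformly continuous.

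Next I would note that the target is complete: because $\lambda_{\TG}$ is continuous, $\CiG = \lambda_{\TG}^{-1}(\vel)$ is a closed subset of the complete space $(\RR{3})^V$, hence complete in $d_2$. A uniformly continuous map from a metric space into a complete metric space extends uniquely to a continuous map on the completion (cf.\ \cite[Theorem 43.7]{MunkrTF}). Applying this to $\iota$, whose source completes to $\CG$ by Definition \ref{dcemb}, yields the desired continuous $q \colon \CG \to \CiG$; by construction it restricts to the inclusion on $\NaG$ and is the surjection appearing in \eqref{eqrelconfsp}.

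The only delicate point is the bookkeeping around the truncation: one must confirm that ``close in $\dpath$'' genuinely forces ``close in $d_2$,'' which is exactly where the inequality $d_2 \le \delta'$ is used, together with the observation that for $\epsilon < 1$ a $\dpath$-ball of radius $\epsilon$ coincides with the corresponding $\delta'$-ball. I expect no serious obstacle beyond this; the statement is essentially the functoriality of completion applied to a Lipschitz comparison of metrics, with the identification of $\CiG$ as a $d_2$-complete space (and, as in Remark \ref{rcompletion}, as the $d_2$-completion of $\NaG$) supplying the remaining input.
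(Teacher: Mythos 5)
Your proof is correct and takes essentially the same route as the paper, whose own argument (Remark \ref{rcompletion} together with ``the properties of the completion'') is precisely the comparison $d_{2}\le\dpath$ on small $\dpath$-balls plus the extension of a uniformly continuous map over the completion. If anything, you are slightly more careful than the paper: by extending into the $d_{2}$-complete space $\CiG$ directly rather than asserting that the $d_{2}$-completion of $\NaG$ \emph{is} $\CiG$, you sidestep the density hypothesis that Remark \ref{rquotient} notes can fail when $\Gamma$ has rigid non-embedded immersed configurations.
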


\begin{remark}\label{rquotient}
The map $q$ is a quotient map, as long as \w{\NaG} is dense in
\w[.]{\CiG}  This may fail to hold if $\Gamma$ has rigid non-embedded 
immersed configurations, which are isolated points in \w[).]{\CiG}
In such cases we add the associated ``virtual completed configurations'' as
isolated points in \w[,]{\CG} so that \w{q':\CG\to\CiG}  extends to a surjection.
\end{remark}

%
%
\section{Blow-up of singular configurations}
\label{sbusc}

As in the proof of Lemma \ref{lgerm}, we may think of points in \w{\hEmb(\TG)} as
Cauchy sequences \w{(x_{i})_{i=1}^{\infty}=(\gamma(t_{i}))_{i=1}^{\infty}} lying on a
smooth path $\gamma$ in \w[,]{\Emb(\TG)} which we can partition into  equivalence
classes according to the limiting tangent direction
\w{\vv:=\lim\sb{i\to\infty}\gamma'(t\sb{i})} of $\gamma$.

We can use this idea to construct an approximation to \w[,]{\hEmb(\TGn{n})}
by blowing up the singular configurations where links or joints of
$\Gamma$ meet (cf.\ \cite[II, \S 4]{ShafB1}). For our purpose the following
simplified version will suffice:

\begin{defn}\label{dlinknum}
Given an abstract linkage \w[,]{\TG=(V,E)} we have an orientation for each
\emph{generalized edge} (that is, edge, half-line, or line) \w[,]{e\in E}
by Remark \ref{rlinkage}.
Let $\cP$ denote the collection of all ordered pairs \w{(e',e'')\in E\sp{2}}
of distinct edges of $\Gamma$ which have no vertex in common.

For each embedding \w{\bx:V\to\RR{3}} of \w{\TG} in space and each pair
\w[,]{\xi:=(e',e'')\in\cP} let $\vw$ denote the \emph{linking vector}
connecting the closest points \w{\ba\in\bx(e')} and \w{\bb\in\bx(e'')}
on the generalized segments
\w{\bx(e')} and \w{\bx(e'')} (in that order). Since $\bx$ is an embedding,
\w[.]{\vw\neq\vze} We define an invariant
\w{\phi\sb{\xi}(\bx)\in\ZZ/3=\{-1,0,1\}} by:
$$
\phi\sb{\xi}(\bx)~:=~\begin{cases}
\sign(\vw\cdot(\bx(e')\times\bx(e''))) & \text{if $\ba$ is interior
to\w[,]{\bx(e')} $\bb$ to \w[,]{\bx(e'')}}\\
& \text{and $\bx(e')$ and $\bx(e'')$ are not coplanar}\\
0 & \text{otherwise.}
\end{cases}
$$
\noindent This is just the \emph{linking number} \w{\lk\sb{(\ell',\ell'')}}
of the lines \w{\ell'} and \w{\ell''} containing \w{\bx(e')} and
\w[,]{\bx(e'')} respectively, although the usual convention is that
\w{\lk\sb{(\ell',\ell'')}} is undefined when the two lines are coplanar
(cf.\ \cite{DViroC}).
\end{defn}

\begin{defn}\label{dblowupr}
If we let $\F$ denote the product space \w[,]{(\RR{3})\sp{V}\times(\ZZ/3)\sp{\cP}}
the collection of invariants \w{\phi\sb{\xi}(\bx)} together define a
(not necessarily continuous) function \w[,]{\Phi:\Emb(\TG)\to\F} equipped
with a projection \w[,]{\pi:\F\to(\RR{3})\sp{V}} such that \w{\pi\circ\Phi} is the
inclusion \w{j:\Emb(\TG)\hra(\RR{3})\sp{V}} of \S \ref{dpathemtr}.

We now define an equivalence relation $\sim$ on $\F$ generated as follows:
consider a Cauchy sequence \w{(\bx\sb{i})\sb{i=1}\sp{\infty}} in
\w{X=\Emb(\TG)} with respect to the path metric \w{\dpath}
(cf.\ \S \ref{dpathemtr}).
Since \w{j:\Emb(\TG)\hra(\RR{3})\sp{V}} is an inclusion into a complete
metric space, and \w{\dpath} bounds the Euclidean metric in \w[,]{(\RR{3})\sp{V}}
the sequence \w{j(\bx\sb{i})\sb{i=1}\sp{\infty}} converges to a point
\w[.]{\bx\in(\RR{3})\sp{V}}

If there are two (distinct) sequences
\w{\vec{\alpha}=(\alpha\sb{\xi})\sb{\xi\in\cP}}
and \w{\vec{\beta}=(\beta\sb{\xi})\sb{\xi\in\cP}} in \w{(\ZZ/3)\sp{\cP}}
and a Cauchy sequence \w{(\bx\sb{i})\sb{i=1}\sp{\infty}} as above such that for
each \w{N>0} there are \w{m,n\geq N} with \w{\phi\sb{\xi}(\bx\sb{m})=\alpha\sb{\xi}}
and \w{\phi\sb{\xi}(\bx\sb{n})=\beta\sb{\xi}} for all \w[,]{\xi\in\cP} then
we set \w{(\bx,\vec{\alpha})\sim(\bx,\vec{\beta})} in $\F$, where
\w{\bx=\lim\sb{i}\ j(\bx\sb{i})} in \w[.]{(\RR{3})\sp{V}}

Finally, let \w{\hF:=\F/\sim} be the quotient space, with \w{q:\F\to\hF} the
quotient map. Note that the projection \w{\pi:\F\to(\RR{3})\sp{V}} induces a
well-defined surjection \w[.]{\hpi:\hF\to(\RR{3})\sp{V}} The other
projection \w{\F\to(\ZZ/3)\sp{\cP}} induces the \emph{completed linking number}
invariants \w{\phi\sb{\xi}\colon\hF\to\ZZ/3} for each \w{\xi\in\cP} (where
\w{\phi\sb{\xi}(\bx)} is set equal to $0$ if \w[).]{\alpha\sb{\xi}\neq\beta\sb{\xi}}
\end{defn}

By Definition \ref{dcemb} we have the following:

%
%
\begin{prop}\label{pcontinuous}
The function \w{\Phi:\Emb(\TG)\to\F} induces a continuous map
\w[.]{\hPhi:\hEmb(\TG)\to \hF}
\end{prop}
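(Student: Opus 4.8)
The plan is to define $\hPhi$ directly on Cauchy sequences and then show that the equivalence relation $\sim$ of Definition \ref{dblowupr} is exactly what forces both well-definedness and continuity. By Definition \ref{dcemb} a point of $\hEmb(\TG)$ is represented by a $\dpath$-Cauchy sequence $(\bx_{i})$ in $\Emb(\TG)$, and as noted there $j(\bx_{i})$ converges to a point $\bx\in(\RR{3})^{V}$. Since $\cP$ is finite, the set $(\ZZ/3)^{\cP}$ of label vectors is finite, so some vector $\vec{\alpha}=(\phi_{\xi}(\bx_{i}))_{\xi}$ occurs for infinitely many $i$; passing to that subsequence, $\Phi(\bx_{i})=(j(\bx_{i}),\vec{\alpha})$ converges in $\F$ to $(\bx,\vec{\alpha})$, and I would set $\hPhi([\gamma]):=q(\bx,\vec{\alpha})$. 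By construction $\hpi\circ\hPhi$ agrees with the continuous extension of $j$ to $\hEmb(\TG)$.

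For well-definedness I first handle the choice of frequent label: if $\vec{\alpha}$ and $\vec{\beta}$ both occur infinitely often along the same Cauchy sequence, then $(\bx,\vec{\alpha})\sim(\bx,\vec{\beta})$ is precisely an instance of the generating relation in Definition \ref{dblowupr}, so $q(\bx,\vec{\alpha})=q(\bx,\vec{\beta})$ and the chosen label is irrelevant. Next, if $(\bx_{i})$ and $(\bx'_{i})$ represent the same point of $\hEmb(\TG)$, their interleaving is again $\dpath$-Cauchy with the same Euclidean limit $\bx$, so any frequent label of one and any frequent label of the other are both frequent along the interleaved sequence and hence determine the same class in $\hF$. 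This shows $\hPhi$ is a well-defined map $\hEmb(\TG)\to\hF$ restricting to $q\circ\Phi$ on $\Emb(\TG)$.

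Since $\hEmb(\TG)$ is metric, hence first countable, it suffices to prove $\hPhi$ is sequentially continuous. Let $p^{(m)}\to p$ in $\hEmb(\TG)$, write $\hPhi(p)=q(\bx,\vec{\alpha})$ and $\hPhi(p^{(m)})=q(\bx^{(m)},\vec{\alpha}^{(m)})$; because $\dpath$ dominates the Euclidean metric on small scales, $\bx^{(m)}\to\bx$ in $(\RR{3})^{V}$. Suppose $\hPhi(p^{(m)})\not\to q(\bx,\vec{\alpha})$: choosing an open $U\ni q(\bx,\vec{\alpha})$ avoided along a subsequence and using finiteness of $(\ZZ/3)^{\cP}$, I may assume $\vec{\alpha}^{(m)}=\vec{\beta}$ is constant with $(\bx^{(m)},\vec{\beta})\notin q^{-1}(U)$ for all $m$. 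The main point, and the step I expect to be the real obstacle, is to manufacture a single witnessing Cauchy sequence so as to invoke $\sim$. Since $\vec{\beta}$ occurs frequently near each $p^{(m)}$ while $\dpath(p^{(m)},p)\to 0$, there are genuine embeddings $\bw_{m}\in\Emb(\TG)$ with label $\vec{\beta}$ and $\dpath(\bw_{m},p)\to 0$; interleaving $(\bw_{m})$ with a label-$\vec{\alpha}$ subsequence of a representative of $p$ yields a $\dpath$-Cauchy sequence with Euclidean limit $\bx$ that carries both $\vec{\alpha}$ and $\vec{\beta}$ infinitely often. Hence $(\bx,\vec{\beta})\sim(\bx,\vec{\alpha})$, so $(\bx,\vec{\beta})\in q^{-1}(U)$; as $q^{-1}(U)$ is open in $\F$ and the second factor is discrete, it contains a Euclidean ball about $\bx$ times $\{\vec{\beta}\}$, whence $(\bx^{(m)},\vec{\beta})\in q^{-1}(U)$ for large $m$, contradicting the choice of $U$. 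This establishes sequential continuity, and with it the proposition.
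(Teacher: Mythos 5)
Your proposal is correct and follows the paper's (implicit) approach: the paper in fact gives no proof at all, asserting only that the proposition holds ``By Definition \ref{dcemb}'', because the relation $\sim$ of Definition \ref{dblowupr} is formulated on $\dpath$-Cauchy sequences precisely so that $\Phi$ descends to the completion. Your elaboration \wh defining $\hPhi$ via a frequently occurring label vector (using finiteness of $(\ZZ/3)^{\cP}$), the interleaving trick for well-definedness across equivalent Cauchy sequences, and the sequential-continuity argument via openness of $q^{-1}(U)$ in $(\RR{3})^{V}\times(\ZZ/3)^{\cP}$ \wh is exactly the verification the paper leaves to the reader, and each step checks out.
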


\begin{defn}\label{dblowup}
Given an abstract linkage \w[,]{\TG=(V,E)} the image of the
map \w[,]{\hPhi:\hEmb(\TG)\to \hF} denoted by \w[,]{\tEmb(\TG)} is called the
\emph{blow-up} of the space of embeddings \w[.]{\Emb(\TG)}  It contains the
\emph{blow-up} \w{\tCG} of the configuration space  \w{\CG}  as a closed subspace;
this is defined to be the closure of the image of \w[.]{\hPhi\rest{\CG}}
\end{defn}

\begin{mysubsect}{The singular set of \ $\CiG$}\label{sfilt}
It is hard to analyze the completed space of embeddings \w{\hEmb(\TG)} or
the corresponding configuration space \w[,]{\CG} since the new
points are only describable in terms of Cauchy sequences in \w[.]{\Emb(\TG)}
However, for most linkages $\Gamma$, the space \w{\NaG} of embedded configurations is
dense in the space of immersed configurations \w[.]{\CiG} We denote its complement
by \w[.]{\Sigma:=\CiG\setminus\NaG} A point (graph immersion) \w{\bx\in\Sigma}
must have at least one intersection between edges not at a common vertex.

Note that generically, in a dense open subset $U$ of $\Sigma$,
for any two edges \w{(e',e'')\in\cP} the intersection of \w{\bx(e')} and \w{\bx(e'')}
is at an (isolated) points internal to both edges, and each of
the intersections are independent.
\end{mysubsect}

%
%
\begin{prop}\label{pcovering}
All fibers of $\hpi$ are finite, the restriction of $\tpi$ to \w{\NaG} (or
\w[)]{\Emb(\TG)} is an embedding, while the restriction of
$\tpi$ to \w{\tpi\sp{-1}(U)\to U} is a covering map.
\end{prop}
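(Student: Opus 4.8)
I would treat the three assertions separately, disposing of finiteness and the embedding claim first and reserving the covering statement for the end.

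\smallskip

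\noindent\emph{Finiteness of the fibers of $\hpi$.} Since $\hpi$ is induced by $\pi$ via the quotient map $q\colon\F\to\hF$, we have $\hpi\circ q=\pi$ with $q$ surjective, so for any $\bx\in(\RR{3})\sp{V}$ one has $\hpi\sp{-1}(\bx)=q(\pi\sp{-1}(\bx))$. But $\pi\sp{-1}(\bx)=\{\bx\}\times(\ZZ/3)\sp{\cP}$ is finite, of cardinality $3\sp{|\cP|}$, because $E$ (hence $\cP\subseteq E\sp{2}$) is finite and $\ZZ/3$ is finite. Applying $q$ can only decrease cardinality, so $|\hpi\sp{-1}(\bx)|\le 3\sp{|\cP|}<\infty$.

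\smallskip

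\noindent\emph{The embedding statement.} Write $\tpi$ for the restriction of $\hpi$ to the blow-up. By Proposition \ref{pcontinuous} the map $\hPhi\colon\hEmb(\TG)\to\hF$ is continuous, hence so is its restriction to $\Emb(\TG)\subseteq\hEmb(\TG)$, which equals $q\circ\Phi$. Since $\hpi\circ q=\pi$ and $\pi\circ\Phi=j$, we get $\tpi\circ(\hPhi\rest{\Emb(\TG)})=j$, the open inclusion $\Emb(\TG)\hra(\RR{3})\sp{V}$. In particular $\hPhi\rest{\Emb(\TG)}$ is injective and $\tpi$ carries $\hPhi(\Emb(\TG))$ bijectively onto $\Emb(\TG)$; this bijection is continuous (being a restriction of $\hpi$) with continuous inverse $\hPhi\rest{\Emb(\TG)}$, hence a homeomorphism. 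Thus $\tpi\rest{\Emb(\TG)}$ is an embedding, and restricting further to $\NaG\subseteq\Emb(\TG)$ gives the claim. The only subtle point is that although $\Phi$ itself is discontinuous, passing to $\hF$ repairs this, which is exactly the content of Proposition \ref{pcontinuous}.

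\smallskip

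\noindent\emph{The covering statement: local model and fiber.} The geometric input is the description of $U$ in \S\ref{sfilt}. Fix $\bx\sb{0}\in U$ and let $\xi\sb{1},\dotsc,\xi\sb{m}\in\cP$ be the pairs whose segments cross at $\bx\sb{0}$. By the genericity hypotheses defining $U$, each crossing is an isolated transversal interior intersection cut out by a single equation (two skew lines becoming incident) and the crossings are independent; hence in a ball $\eM$ about $\bx\sb{0}$ in $\CiG$ (smooth near $\bx\sb{0}$) the singular set $\Sigma\cap\eM$ is a union $H\sb{1}\cup\dotsb\cup H\sb{m}$ of smooth hypersurfaces meeting transversally, which after a local diffeomorphism are coordinate hyperplanes. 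Consequently $\eM\cap\NaG=\eM\setminus\bigcup\sb{j}H\sb{j}$ has exactly $2\sp{m}$ connected components (``orthants''), indexed by sign vectors $s\in\{\pm 1\}\sp{m}$; shrinking $\eM$, on the $s$-orthant one has $\phi\sb{\xi\sb{j}}=s\sb{j}$, while $\phi\sb{\eta}$ is a fixed constant for every non-crossing pair $\eta$. Each orthant is realized by approaching a point $\by\in\eM\cap U$ from within it, so by Proposition \ref{pcontinuous} the label $\vec{\gamma}\up{s}$ (with $s\sb{j}$ on the crossing pairs and the stable values elsewhere) occurs in $\tpi\sp{-1}(\by)$. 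The essential point, and the main obstacle, is that distinct orthants are \emph{not} identified by the relation $\sim$ of Definition \ref{dblowupr}: by definition $(\by,\vec{\alpha})\sim(\by,\vec{\beta})$ only if a single $\dpath$-Cauchy sequence realizes both, whereas any path in $\NaG$ joining the $s$-orthant to the $s'$-orthant ($s\neq s'$) must cross some $H\sb{j}$ and therefore leave $\eM$ — a transversal hyperplane disconnects the ball, so one cannot ``go around'' without exiting — which forces a uniform lower bound $\dpath\ge c>0$ between the two orthants near $\bx\sb{0}$. Hence every $\dpath$-Cauchy sequence converging to $\by$ has its tail in a single orthant, so no two labels $\vec{\gamma}\up{s}\neq\vec{\gamma}\up{s'}$ are merged and none is forced to $0$; the fiber is precisely $\{(\by,\vec{\gamma}\up{s})\}\sb{s}$, of cardinality $2\sp{m}$. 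Establishing this uniform path-length bound — formalizing that the two sides of a transversal crossing are genuinely far apart in the path metric — is the crux of the argument.

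\smallskip

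\noindent\emph{Local triviality.} Over $\eM\cap U$ the label $\vec{\gamma}\up{s}$ is \emph{constant} in $\by$ (its crossing entries are $s\sb{j}$ and its remaining entries are the fixed stable values), so each sheet is the image under $q$ of $(\eM\cap U)\times\{\vec{\gamma}\up{s}\}\subseteq\F$. Since the factor $(\ZZ/3)\sp{\cP}$ of $\F$ is discrete and $q$ is injective on $\bigsqcup\sb{s}(\eM\cap U)\times\{\vec{\gamma}\up{s}\}$ by the previous paragraph, $\tpi\sp{-1}(\eM\cap U)$ is the disjoint union of $2\sp{m}$ copies of $\eM\cap U$, each mapped homeomorphically onto $\eM\cap U$ by $\tpi$. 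This is an evenly-covered neighborhood, and letting $\bx\sb{0}$ range over $U$ shows that $\tpi\colon\tpi\sp{-1}(U)\to U$ is a covering map. The fiber size $2\sp{m}$ is locally constant but may vary over components of $U$ carrying different numbers of crossings, consistent with the covering being nontrivial.
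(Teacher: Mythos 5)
Your first two parts are correct and are exactly what the paper leaves to the reader: finiteness of the fibers of $\hpi$ is immediate from $\hpi^{-1}(\bx)=q(\{\bx\}\times(\ZZ/3)^{\cP})$, and the embedding claim follows formally from $\tpi\circ(\hPhi\rest{\Emb(\TG)})=j$ together with Proposition \ref{pcontinuous}. For the covering statement your skeleton is the same as the paper's one-observation proof \wh no $\sim$-identifications occur over $U$ because the linking numbers stabilize on the tail of any Cauchy sequence \wh but you supply precisely the step the paper asserts without argument: the uniform $\dpath$-lower bound between the local ``sides'', coming from the fact that a path in $\NaG$ joining two orthants must cross some $H_j$ and hence exit the ball $\eM$. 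That separation lemma is the real content behind the paper's sentence that ``there is a neighborhood $N$ of $\bx$ where $\phi_{\xi}(\bx_i)$ is constant'', and your transversal local model makes it precise. (Both you and the paper also implicitly shrink $U$ so that the labels $\phi_{\eta}$ of the non-crossing pairs are stable near each point; that is harmless since $U$ is only required to be dense open in $\Sigma$.)

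There is, however, a genuine flaw in your local-triviality paragraph when $m\geq 2$. You claim the fiber over \emph{every} $\by\in\eM\cap U$ has cardinality $2^{m}$, with $\tpi^{-1}(\eM\cap U)$ splitting into $2^{m}$ copies of $\eM\cap U$, and you justify the possible variation of fiber size by asserting that different crossing numbers occur only on different \emph{components} of $U$. That is false: a configuration $\by\in H_1\setminus(H_2\cup\dotsb\cup H_m)$, obtained from $\bx_0$ by resolving all crossings but the first, still lies in $\Sigma$, is still generic, hence lies in $U$ and in the same local piece of $U$ as $\bx_0$; but over such $\by$ the labels $\phi_{\xi_j}$ for $j\geq 2$ are forced to the (nonzero) sign of the corresponding defining function at $\by$, so the fiber has only $2$ points, not $2^{m}$. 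In other words, the crossing pattern is not locally constant on the generic part of $\Sigma$: $\eM\cap U$ meets strata with different numbers of double points, and your sheets $q\bigl((\eM\cap U)\times\{\vec{\gamma}^{(s)}\}\bigr)$ do not all lie in the blow-up over the smaller strata. Your argument is complete and correct if one reads $U$ \wh as the proposition requires in order to be true, and as the paper's terse proof effectively does \wh as the stratum of configurations with exactly one transversal interior double point per component of intersection pattern (still dense and open in $\Sigma$), where $m=1$ locally, or more generally if one first stratifies $\eM\cap U$ by the set of crossing pairs and proves even covering stratum by stratum. As you wrote it, with multi-crossing points admitted into $U$, the fiber cardinality of $\tpi$ jumps from $2^{m}$ to $2^{m-1}$ along $H_1\cap\dotsb\cap H_m\to H_1$ inside one connected subset of $U$, and no map can be a covering there.
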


\begin{proof}
Observe that the identifications made by the equivalence relation $\sim$ on $\F$
(or $\G$) do not occur over points of $U$, since for any \w{\bx\in U} and
\w[,]{\xi=(e',e'')\in\cP} the intersection of \w{\bx(e')} and \w{\bx(e'')} is
at a single point internal to both (and in particular, \w{\bx(e')} and
\w{\bx(e'')} are not parallel). Therefore, for any
Cauchy sequence \w{(\bx\sb{i})\sb{i=1}\sp{\infty}} in \w{\Emb(\TG)} (or
\w[)]{\NaG} converging to $\bx$, there is a neighborhood $N$ of $\bx$ in
\w{\Emb(\TG)} (or \w[)]{\NaG} where \w{\phi\sb{\xi}(\bx_{i})} is constant \w{+1}
or constant \w{-1} for all \w[.]{\xi\in\cP}
\end{proof}

We may summarize our constructions so far in the following two diagrams:

%
%
\mydiagram[\label{eqsummaryt}]{
\Emb(\TG) \ar@{^{(}->}[d] \ar@{^{(}->}[rrd]
\ar@{^{(}->}[rr]^{\Phi} &&\F=(\RR{3})\sp{V}\times(\ZZ/3)\sp{\cP}
\ar@{->>}[rd]\sp{q}&\\
\hEmb(\TG) \ar@{->>}[rr]\sp<<<<<<<<<<<{\hPhi} \ar[d]\sp{\widehat{\pi}}&&
\tEmb(\TG) \ar[lld]\sb{\tpi} \ar@{^{(}->}[r] & \hF \ar[llld]^{\hpi}\\
(\RR{3})\sp{V}&&
}
\noindent and similarly for the various types of configuration spaces:
\mydiagram[\label{eqsummaryc}]{
&&\NaG \ar@{^{(}->}[d] \ar@{^{(}->}[rrd]
\ar@{^{(}->}[rr]^{\Phi} &&\G=\CiG\times(\ZZ/3)\sp{\cP}
\ar@{->>}[rd]\sp{q}&\\
&&\CG \ar@{->>}[rr]\sp<<<<<<<<<<<{\hPhi} \ar[d]\sp{\widehat{\pi}}&&
\tCG \ar[lld]\sb{\tpi} \ar@{^{(}->}[r] & \htG \ar[llld]^{\hpi}\\
U \ar@{^{(}->}[r] & \Sigma \ar@{^{(}->}[r]  & \CiG&&
}
\noindent where \w{\widehat{\pi}} is generically a surjection (unless \w{\CG} has
isolated configurations).

%
%
\section{Local description of the blow-up}
\label{clblow}

As we shall see, the global structure of the blow up \w{\tEmb(\TG)}
(or \w[)]{\tCG} can be quite involved, even for the simple linkage \w{\Gamma\sb{2}}
consisting of two lines. The local structure is also hard to understand, in general,
since even the classification of the types of singularities can be arbitrarily
complicated. In the complement of the generic singularity set $U$
(cf.\ \S \ref{sfilt}) we have virtual configurations where:

\begin{enumerate}
\renewcommand{\labelenumi}{(\alph{enumi})~}
\item \w{k\geq 3} edges meet at a single point $P$ ($k$ will be called the
\emph{multiplicity} of the intersection at $P$);
\item Three or more edges meet pairwise (or with higher multiplicities);
\item One or more edges meet at a vertex (not belonging to the edges in question);
\item Two or more vertices (belonging to disjoint sets of edges) meet;
\item Two or more edges coinciding;
\item Any combination of the above situations (including the simple meeting of two
edges at internal points, as in $U$ above) may result in a higher order singularity
if one situation imposes a constraint on another (as when intervening links are
aligned and  stretched to their maximal length).
\end{enumerate}

The goal of this section is to initiate a study of the simpler kinds of singularity
as they appear in the blow-up.

\begin{mysubsection}{Double points}\label{stwoblow}
The simplest non-trivial case of a blow-up occurs for a blown-up configuration
\w{\bw\in\tEmb(\TG)} where two edges \w{e'} and \w{e''} have a single intersection
point $P$ interior to both \w{\bx(e')} and \w[.]{\bx(e'')}
We assume that restricting $\bw$ to the submechanism
\w{\T\sb{\Gamma'}:=\TG\setminus\{e',e''\}} obtained by omitting these two edges
yields an embedding \w[.]{\bw'\in\Emb(\T\sb{\Gamma'})}
In this case we have a neighborhood $N$ of \w{\tpi(\bw)} in \w{(\RR{3})\sp{V}}
for which \w{\tpi\sp{-1}(N)} is a product \w[,]{N'\times M} where \w{N'} is an
open set in a Euclidean space \w{\RR{k}} corresponding to a coordinate patch
around \w{\bw'}
in \w[,]{\Emb(\T\sb{\Gamma'})} while $M$ is diffeomorphic to a suitable open set
in the blow-up \w{\tC(\Gamma\sb{2})} for the two-line mechanism analyzed in
\S \ref{cpgint}A. below.
Thus $M$ is homeomorphic to the disjoint union of two half-spaces:
\w[.]{\HH\sp{8}\times\{\pm 1\}}
\end{mysubsection}

Nevertheless, we can list the simpler types of singularity (outside of $U$).

\begin{mysubsection}{Edge and elbow}\label{sedgeelb}
Now consider the case where an interior point of one edge \w{e\sb{1}} meets a
vertex $v$ common to two other edges \w{e\sb{2}'} and \w{e\sb{2}''} (thus
forming an ``elbow'' \w[),]{\Lambda\sb{2}} as in Figure \ref{eqedgeelbowone}:

%
%
\begin{figure}[ht]
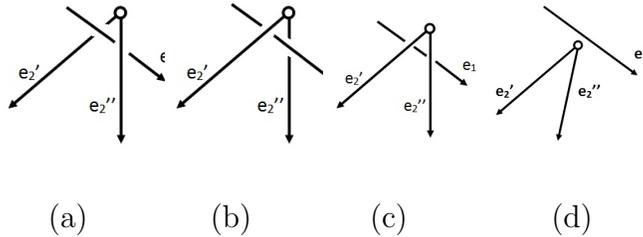

\begin{center}
\begin{tabular}{cccc}
\scalebox{0.4}{\includegraphics{css_fig16.jpg}}
\hspace*{-7mm}&
\scalebox{0.4}{\includegraphics{css_fig15.jpg}}\hspace*{-7mm}
& \scalebox{0.4}{\includegraphics{css_fig9.jpg}}
\hspace*{-9mm}
& \scalebox{0.35}{\includegraphics{css_fig8.jpg}}\\ \\
(a) & (b) & (c)& (d)
\end{tabular}
\end{center}
\caption{Edge and elbow embedding}
\label{eqedgeelbowone}
\end{figure}

We can think of \w{\{e\sb{1},\Lambda\}} as forming a (disconnected) abstract
linkage \w[,]{\TG=(V,E)} so as in \S \ref{dlinknum},
for each embedding \w{\bx:V\to\RR{3}} of \w{\TG} in space we have two
invariants \w{\phi\sb{\xi'}(\bx),\phi\sb{\xi''}(\bx)\in\ZZ/3=\{-1,0,1\}} \wwh
namely, the linking numbers of \w{\bx(e\sb{1})} with \w{\bx(e\sb{2}')}
and of \w{\bx(e\sb{1})} with \w[,]{\bx(e\sb{2}'')} respectively. Together
they yield \w[.]{\vec{\phi}(\bx)\in(\ZZ/3)\sp{\cP}=\ZZ/3\times\ZZ/3}

For example, if in the embedding $\bx$ shown in Figure \ref{eqedgeelbowone}(a)
we have chosen the orientation for \w{\RR{3}} so as to have linking numbers
\w[,]{\vec{\phi}(\bx)=(+1,-1)} say, then
Figure \ref{eqedgeelbowone}(b) will have \w[.]{\vec{\phi}(\bx)=(-1,+1)}

On the other hand, for the embedding of Figure \ref{eqedgeelbowone}(c) we have
\w[,]{\vec{\phi}(\bx)=(-1,-1)}
while for Figure \ref{eqedgeelbowone}(d) we have \w[,]{\vec{\phi}(\bx)=(0,0)}
since the nearest points to \w{\bx(e\sb{1})} on \w{\bx(e\sb{2}')} or
\w{\bx(e\sb{2}'')} are not interior points of the latter.

Thus we see that the immersed configuration represented by Figure
\ref{eqvirtedgeelbowone}(a), in which \w{\bx(e\sb{1})} passes through the vertex
\w[,]{\bx(v)} but is not coplanar with \w{\bx(e\sb{2}')} and \w[,]{\bx(e\sb{2}'')}
has two preimages in the blowup \w[,]{\tEmb(\TG)=\tCG} one of which corresponds to
Figure \ref{eqedgeelbowone}(a) (with invariants \w[),]{(+1,+1)}  while the other
preimage corresponds to both Figures \ref{eqedgeelbowone}(c)-(d), under the
equivalence relation of \S \ref{dblowupr}, since we can have Cauchy sequences of
either type converging to \ref{eqvirtedgeelbowone}(a).

On the other hand, the immersed configuration represented by Figure
\ref{eqvirtedgeelbowone}(b), in which \w{\bx(e\sb{1})} passes through the vertex
\w[,]{\bx(v)} and all edges are coplanar, is represented by three distinct
types of inequivalent Cauchy sequences in \w[,]{\NaG} corresponding to
Figure \ref{eqedgeelbowone}(a), Figure \ref{eqedgeelbowone}(b), and
Figure \ref{eqedgeelbowone}(c)-(d), respectively.  Thus it has
\emph{three} preimages in the blowup \w[.]{\tEmb(\TG)=\tCG}
This is the reason we used invariants in \w[,]{\ZZ/3} rather than \w[.]{\ZZ/2}

%
%
\begin{figure}[ht]
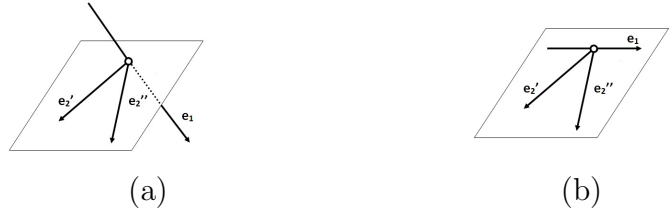

\begin{center}
\begin{tabular}{cc}
\scalebox{0.3}{\includegraphics{css_fig12.jpg}}
\hspace*{30mm}
& \scalebox{0.3}{\includegraphics{css_fig13.jpg}}
\hspace*{20mm}\\
\hspace*{-20mm}(a) & \hspace*{-20mm}(b)
\end{tabular}
\end{center}
\caption{Virtual edge and elbow configuration}
\label{eqvirtedgeelbowone}
\end{figure}

One further situation we must consider in analyzing the edge-elbow linkage
is when two or more edges coincide:

\begin{enumerate}
\renewcommand{\labelenumi}{(\alph{enumi})~}
\item When only \w{\bx(e\sb{2}')} and \w{\bx(e\sb{2}'')} coincide \wh that is,
the elbow is closed \wh we still have the two cases described in Figure
\ref{eqvirtedgeelbowone}.
\item If \w{\bx(e\sb{1})} coincides with \w[,]{\bx(e\sb{2}')} say,
with \w{\bx(v)} internal to \w[,]{\bx(e\sb{1})} the pre-image in
\w{\tEmb(\TG)} is a single virtual configuration (since all cases are
identified under $\sim$). This is true whether or not the elbow is closed.
\end{enumerate}
\end{mysubsection}

\begin{mysubsection}{Two elbows}\label{stwoelb}
Next consider two elbows: \w[,]{\Lambda\sb{1}} consisting of two edges
\w{e\sb{1}'} and \w{e\sb{1}''} with a common vertex \w{v\sb{1}}
and \w[,]{\Lambda\sb{2}} where two other edges \w{e\sb{2}'} and \w{e\sb{2}''}
are joined at the vertex \w[,]{v\sb{2}} as in Figure \ref{eqdoubleelbowone}:

%
%
\begin{figure}[ht]
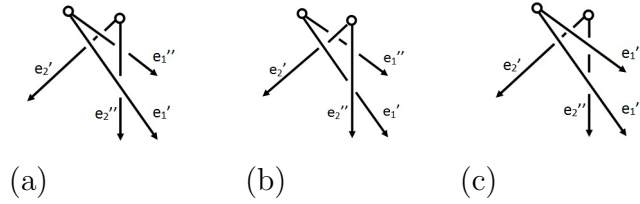

\begin{center}
\begin{tabular}{ccc}
\scalebox{0.4}{\includegraphics{css_fig18.jpg}}
\hspace*{5mm}
& \scalebox{0.35}{\includegraphics{css_fig19.jpg}}
\hspace*{5mm}
&\scalebox{0.4}{\includegraphics{css_fig17.jpg}}\\
\hspace*{-25mm}(a) & \hspace*{-25mm}(b) & \hspace*{-25mm}(c)
\end{tabular}
\end{center}
\caption{Double elbow configurations}
\label{eqdoubleelbowone}
\end{figure}

Now we have four two-edge configurations, consisting of pairs of edges
\w[,]{(e\sb{1}',\,e\sb{2}')} \w[,]{(e\sb{1}',\,e\sb{2}'')}
\w{(e\sb{1}'',\,e\sb{2}')} and
\w{(e\sb{1}'',\,e\sb{2}'')} respectively, so \w{\vec{\phi}} takes value in
\w[.]{(\ZZ/3)\sp{\cP}=(\ZZ/3)\sp{4}}

For example, in the embedding of  Figure \ref{eqdoubleelbowone}(a)
we  have \w[,]{\vec{\phi}(\bx):=(+1,+1,+1,-1)}
in Figure \ref{eqdoubleelbowone}(b) we then have
\w[,]{\vec{\phi}(\bx):=(+1,-1,-1,-1)} while in Figure \ref{eqdoubleelbowone}(c)
we have \w[.]{\vec{\phi}(\bx):=(+1,+1,+1,+1)}

The virtual configuration we need to consider is one in which the vertices of the
two elbows coincide. As in Section \ref{sedgeelb}, we must consider a number
of mutual positions in space, as in Figure \ref{eqvirtdoubleelbow}.

%
%
\begin{figure}[ht]
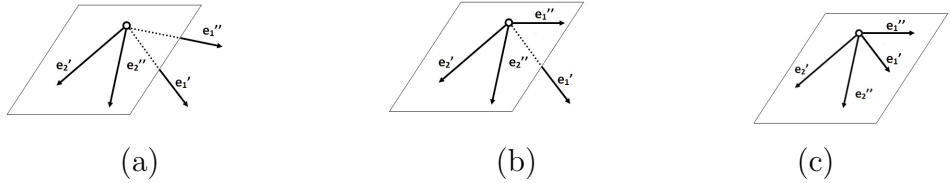

\begin{center}
\begin{tabular}{ccc}
\scalebox{0.3}{\includegraphics{css_fig20.jpg}}
\hspace*{15mm}
&\scalebox{0.3}{\includegraphics{css_fig21.jpg}}
\hspace*{15mm}
& \scalebox{0.3}{\includegraphics{css_fig22.jpg}}\\
\hspace*{-10mm}(a) & \hspace*{-10mm}(b) & \hspace*{-10mm}(c)
\end{tabular}
\end{center}
\caption{Virtual double elbow configuration}
\label{eqvirtdoubleelbow}
\end{figure}

Assuming the edges \w{\bx(e\sb{2}')} and \w{\bx(e\sb{2}'')} do not coincide, they
span a plane $E$. If at least one of the edges \w{e\sb{1}'} and \w{e\sb{1}''}
does not lie in $E$, as in Figure \ref{eqvirtdoubleelbow}(a)-(b), we generally
have three possibilities for the blow-up invariants: namely, the limits of the
three cases shown in Figure \ref{eqdoubleelbowone}, where case (c) (and a number
of others) are identified with the case of the two elbows being disjoint.
The same holds if all four edges lie in $E$, as in
Figure \ref{eqvirtdoubleelbow}(c).

On the other hand, if \w{e\sb{1}'} and \w{e\sb{1}''} are on \emph{opposite}
sides of $E$, we cannot have the mutual positions described in Figure
\ref{eqdoubleelbowone}(b), so only two blow-up invariants can occur.

We do not consider here the more complicated cases when one or two of the elbows
are closed, so that the edges \w{e\sb{2}'} and \w[,]{e\sb{2}''} say, coincide
(and thus we have no plane $E$) \wh even though similar considerations may be
applied there.
\end{mysubsection}

%
%
\section{Pairs of generalized intervals}
\label{cpgint}

Even for relatively simple linkage types \w[,]{\TG} the global structure of the
blow up \w{\tEmb(\TG)} (or \w[)]{\tCG} can be quite complicated.  However,
the local structure is more accessible to analysis.

The simplest non-trivial case of a blow-up for a general linkage type \w{\TG}
occurs when two edges \w{e'} and \w{e''} have a single intersection
point $P$ interior to both \w{\bx(e')} and \w[.]{\bx(e'')}
Such a configuration behaves locally like the completed configuration space
of two (generalized) intervals in \w[,]{\RR{3}} which we analyze in this section.

\supsect{\protect{\ref{cpgint}}.A}{Two lines in $\RR{3}$}

We begin with a linkage type \w{\TGn{1}} of two lines \w{\ell\sb{1}} and
\w[.]{\ell\sb{2}} In this case there is no length vector, so \w{\Gn{1}=\TGn{1}} and
\w[.]{\NaGn{1}=\Emb(\TGn{1})} Note that the convention of \S \ref{rlinkage} implies
that each line has a given \emph{orientation}.

For simplicity, we first consider the case where the first line \w{\ell_{1}}
is the (positively oriented) $x$-axis $\vx$, so we need to understand
the choices of the second line \w{\ell=\ell_{2}} in \w[.]{\RR{3}}

Inside the space $\eL$ of oriented lines $\ell$ in \w{\RR{3}} (that is, 
\w[,]{\eL=\NaGn{0}} where \w{\Gamma\sb{0}} consists of a single line) we have the 
subspace \w{\eL\sb{\vx}} of lines intersecting the $x$-axis. We have
\w[,]{\eL\sb{\vx}=(\RR{}\times S^{2})/\sim}
where \w{(x,\vv)\sim(x',\vv')} for any \w{x,x'\in\RR{}} if and only if
\w[,]{\vv,\vv'\in\{\pm\ve\sb{1}\}} with \w{x\mapsto (x,0,0)} sending $\RR{}$ to
\w[.]{\RR{3}} Moreover, we have a map \w{\varphi:\NrGn{1}\to\eL} from the
(unpointed) reduced configuration space \w{\NrGn{1}=\Emr{\TGn{1}}} for
the original linkage, which is a homeomorphism onto \w[,]{\eL\setminus\eL_{\vx}}
since any reduced embedding of \w{\Gn{1}} in \w{\RR{3}} is determined by a choice
of an (oriented) line $\ell$ not intersecting the $x$-axis.

By taking an appropriate $\var$-tubular neighborhood of the two lines,
we may assume that we have two $\var$-cylinders tangent
to each other in \w[,]{\RR{3}} with one of them symmetric about the $x$-axis (see
Figure \ref{ftangent}).
If we denote the space of such tangent cylinders
by \w[,]{X\sb{\var}} we see that \w{\Emr{\TGn{1}}} is a disjoint union
\w[.]{\bigsqcup_{\var>0}\,X_{\var}} Moreover, by re-scaling we see that all the
spaces \w{X_{\var}} are homeomorphic, so in fact
\w[,]{\Emr{\TGn{1}}\cong X_{1}\times(0,\infty)} say.

%
%
\begin{figure}[htb]
\begin{center}
\scalebox{0.2}{\includegraphics{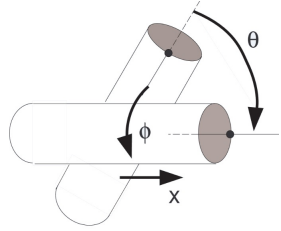}}
\end{center}
\caption{Tangent cylinders}
\label{ftangent}
\end{figure}

Inside the space \w{X\sb{1}} we have a singular locus $\cP$ of configurations
where the two tangent cylinders are parallel. Thus
\w[,]{\cP\cong\bS{1}\times\{\pm 1\}} since such configurations are completely
determined by the rotation $\phi$ of the cylinder \w{T_{1}(\ell)} around $\ell$
with respect to the cylinder \w{T_{1}(\bx)} around the $x$-axis, together with a
choice of the orientation \w{\pm 1} of $\ell$ (relative to $\vx$).

The (open dense) complement \w{X_{1}\setminus\cP} consists of configurations
of a cylinder \w{T_{1}(\ell)} tangent at a single point $Q$ on the boundary
of both cylinders. Such a configuration is determined by:

\begin{enumerate}
\renewcommand{\labelenumi}{(\alph{enumi})~}
\item The projection \w{x} of the point $Q$ on $\vx$;
\item The angle $\theta$ between the (oriented) parallels to the respective axes
$\vx$ and $\ell$ through $Q$; and
\item The rotation $\phi$ of $Q$ about $\vx$ (i.e., the rotation of the
perpendicular \w{\vec{xQ}} to $\vx$ relative to $\vz$).
\end{enumerate}
\noindent (see Figure \ref{ftangent}).

Thus \w{U:=X_{1}\setminus\cP} is diffeomorphic to the open manifold
\w[,]{\RR{}\times\bS{1}\times(\bS{1}\setminus\{0,\pi\})} with global coordinates
\w[,]{(x,\phi,\theta)} identifying $U$ with an open submanifold of the thickened
torus \w[.]{\RR{1}\times\bS{1}\times\bS{1}}
The boundary \w{\RR{1}\times\bS{1}\times\{0,\pi\}} of $U$ is identified in
\w{X_{1}} with $\cP$ by collapsing $\RR{1}$ to a point.

In summary, \w{X_{1}} is homeomorphic to the ``tightened'' torus
\w[,]{(\RR{1}\times\bS{1}\times\bS{1})/\sim} in which two opposite
thickened circles (open annuli) are tightened to ordinary circles
(see Figure \ref{fpinch}).
Thus \w{X_{1}} is homotopy equivalent to a torus \w[.]{\bS{1}\times\bS{1}}

%
%
\begin{figure}[htb]

\begin{center}
\scalebox{0.2}{\includegraphics{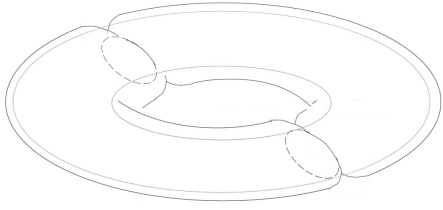}}
\end{center}
\caption{Tightened thickened torus}
\label{fpinch}
\end{figure}

The full space of reduced embeddings \w{\Emr{\TGn{1}}=\NrGn{1}} is still
homotopy equivalent to \w[,]{X_{1}} and thus to \w[.]{\bS{1}\times\bS{1}}
Its completion \w{\hEmr{\TGn{1}}} allows configurations with \w[,]{\var=0} so it
is a quotient of \w[.]{X_{1}\times[0,\infty)} When \w[,]{\theta\neq 0,\pi} we
need no further identifications, since the new ``virtual'' configurations must
still specify on which sides of each other the two lines touch. However, when
$\ell$ is parallel to $\vx$, this has no meaning, so we find:

\begin{myeq}\label{eqconfigone}
\hEmr{\TGn{1}}~=~\CrGn{1}~=~
\left([0,\infty)\times\RR{1}\times\bS{1}\times\bS{1}\right)  /\sim~,
\end{myeq}
\noindent where
\w[:]{(\var_{1},x_{1},\phi_{1},\theta_{1})\sim(\var_{2},x_{2},\phi_{2},\theta_{2})}

\begin{enumerate}
\renewcommand{\labelenumi}{(\alph{enumi})~}
\item for any \w[,]{x_{1},x_{2}\in \RR{1}} if \w[,]{\theta_{1}=\theta_{2}\in\{0,\pi\}}
\w[,]{\phi_{1}=\phi_{2}} and \w[;]{\var_{1}=\var_{2}>0}
\item for any \w{\phi_{1},\phi_{2}\in\bS{1}} and \w[,]{x_{1},x_{2}\in \RR{1}}
if \w{\theta_{1}=\theta_{2}\in\{0,\pi\}} and \w[.]{\var_{1}=\var_{2}=0}
\end{enumerate}

This is because \w{(\var,\phi)} serve as polar coordinates for the plane
perpendicular to $\vx$ through the point of contact $Q$, and \w{\var\cdot\phi}
is the length of the path rotating $\ell$ about $\vx$ when the two are parallel.

We still have a natural map \w{\hat{\varphi}:\CrGn{1}\to\eL} to the space
of oriented lines in \w[,]{\RR{3}} taking a completed configuration to the
location of \w[,]{\ell=\ell_{2}}, but it is two-to-one on all points of
\w{\eL\sb{\vx}} (except $\vx$ itself), since a pair of intersecting lines
corresponds to \emph{two} different completed configurations. To distinguish
between them, we need the following:

\begin{defn}\label{dlinkvec}
As in \S \ref{dlinknum}, given two non-intersecting lines \w{\ell\sb{1}} and
\w{\ell\sb{2}} in \w[,]{\RR{3}} their linking vector
\w{\vw=\vw\sb{(\ell\sb{1},\ell\sb{2})}} is the shortest vector from a
point on \w{\ell\sb{1}} to a point on \w[,]{\ell\sb{2}} perpendicular to their
direction vectors \w{\vu\sb{1}} and \w[.]{\vu\sb{2}}

When the lines are skew, $\vw$ is thus a multiple of \w{\vu\sb{1}\times\vu\sb{2}}
by a scalar \w[,]{a\neq 0} and the linking number
\w{\lk\sb{(\ell\sb{1},\ell\sb{2})}} of the two lines is
\w[,]{\sign(a)\in\{\pm1\}}  with
\w{\lk\sb{(\ell\sb{1},\ell\sb{2})}:=0} when \w{\ell\sb{1}\|\ell\sb{2}} are
parallel, as above.
\end{defn}

By definition, points $\hx$ in the completed configuration space \w{\CrGn{1}}
correspond to (equivalence classes of) Cauchy sequences in the original
space \w[,]{\NrGn{1}} and thus sequences of pairs of $\var$-cylinders tangent
at a common point $Q$. Since we are not interested in configurations over the
special point \w{\vx\in\eL} (for which the two cylinders may be parallel), we
may assume that the angle $\theta$ between the tangents \w{t_{1}} and \w{t_{2}}
to the respective cylinders at $Q$ is not $0$ or $\pi$, so they determine a
plane \w{\spn(t_{1},t_{2})} with a specified normal direction $N$
(towards the second cylinder, having \w{\ell_{2}} as its axis, say).  This is just
the linking vector \w{\vw\sb{(\ell\sb{1},\ell\sb{2})}}  defined above.
The pair \w{(t_{1},t_{2})} converges along the Cauchy sequence to a pair of
intersecting lines \w[,]{(\vx,\ell)} and we define
\w{\hat{\varphi}(\hx):=\ell\in\eL_{\vx}\subseteq\eL} extending the original
homeomorphism \w[.]{\varphi:\NrGn{1}\to\eL\setminus\eL_{\vx}}

Note that any \w{\vx\neq \ell\in\eL_{\vx}} has two sources under
$\hat{\varphi}$, corresponding to the two possible sides of the plane
\w{\spn(\vx,\ell)} on which the cylinder around \w{\ell_{2}} could be.

In fact, \w{\CrGn{1}} is homotopy equivalent to its ``new part''
\w{C:=\CrGn{1}\setminus\NrGn{1}} under the map \w{\psi:\CrGn{1}\to C} sending
\w{[(\var,x,\phi,\theta)]} to \w[.]{[(0,x,\phi,\theta)]} Moreover,
$C$ is homeomorphic to a twice-pinched torus, and so we have shown:

%
%
\begin{prop}\label{ptwoline}
The reduced complete configuration space \w{\CrGn{1}} of two lines in \w{\RR{3}}
is homotopy equivalent to \w[.]{\bS{2}\vee\bS{2}\vee\bS{1}}
Moreover, the map \w{\hat{\varphi}:\CrGn{1}\to\eL} is described up to homotopy
by the map sending each of the two spheres to \w[,]{\bS{2}} with the pinch points 
(corresponding to the two oriented parallels to \w[)]{\ell\sb{1}} sent to the 
two poles.
\end{prop}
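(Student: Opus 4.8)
The plan is to work entirely inside the explicit model \eqref{eqconfigone} and then read off both the homotopy type and the map from the coordinates. First I would invoke the homotopy equivalence $\psi\colon\CrGn{1}\to C$ recorded just above, so that it suffices to analyze the ``new part'' $C=\CrGn{1}\setminus\NrGn{1}$, i.e.\ the locus $\var=0$. By \eqref{eqconfigone} and identification (b), this slice is $\RR{1}\times\bS{1}\times\bS{1}$ with coordinates $(x,\phi,\theta)$, in which the two subsets $\RR{1}\times\bS{1}\times\{0\}$ and $\RR{1}\times\bS{1}\times\{\pi\}$ are each crushed to a single point, $p_0$ and $p_\pi$. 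Since these collapses are independent of $x$, the straight-line homotopy $x\mapsto(1-t)x$ descends to $C$ and retracts it onto the slice $\{x=0\}$; hence $C\simeq(\bS{1}\times\bS{1})/\sim$, the torus with the two disjoint circles $\{\theta=0\}$ and $\{\theta=\pi\}$ pinched to points. These circles cut the torus into two closed annuli $\bS{1}\times[0,\pi]$ and $\bS{1}\times[\pi,2\pi]$, and collapsing both boundary circles of each annulus to a point turns each into a $2$-sphere. Thus $C$ is homeomorphic to two $2$-spheres $A$ and $B$ glued to one another exactly at the two points $p_0$ and $p_\pi$.

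Next I would identify the homotopy type of this twice-pinched torus. Choose an embedded arc $\alpha\subseteq A$ joining $p_0$ to $p_\pi$. As $\alpha$ is contractible and $(C,\alpha)$ is a CW pair, the quotient $C\to C/\alpha$ is a homotopy equivalence. In $C/\alpha$ the points $p_0,p_\pi$ are merged into a single point $q$: the factor $A/\alpha$ becomes $\bS{2}$ attached at $q$, while $B$ acquires the self-identification $p_0\sim p_\pi$, so $B/(p_0\sim p_\pi)\simeq\bS{2}\vee\bS{1}$. These two pieces meet only at $q$, giving $C\simeq\bS{2}\vee(\bS{2}\vee\bS{1})=\bS{2}\vee\bS{2}\vee\bS{1}$. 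As a consistency check, Mayer--Vietoris over $A\cap B=\{p_0,p_\pi\}$ yields $H_\ast(C)=(\ZZ,\ZZ,\ZZ^2,0,\dots)$.

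For the second assertion I would compute $\hat\varphi$ in these coordinates. The invariant $\hat\varphi$ depends only on the limiting line $\ell=\ell_2$, which is determined by $(x,\phi,\theta)$ and not by $\var$, so $\hat\varphi=\hat\varphi|_C\circ\psi$ on the nose. After retracting onto $\{x=0\}$ the line $\ell$ passes through the origin and is thus determined by its direction $\vv\in\bS{2}$; in spherical coordinates based at $\vx$ one has $\vv=(\cos\theta,\sin\theta\cos\phi,\sin\theta\sin\phi)$. Since $\eL$ deformation retracts onto its $\bS{2}$ of directions and sliding $\ell$ along $\vx$ is a homotopy fixing this direction, $\hat\varphi$ is homotopic to the direction map $(\phi,\theta)\mapsto\vv$. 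On each annulus this is the standard spherical parametrization $\bS{1}\times[0,\pi]\to\bS{2}$, a homeomorphism on the interior carrying $\theta=0$ to the pole $+\vx$ and $\theta=\pi$ to the pole $-\vx$; after the pinching it becomes a homeomorphism $A\to\bS{2}$ (in particular of degree $\pm1$), and likewise $B\to\bS{2}$. The pinch points $p_0,p_\pi$ are precisely the configurations with $\ell$ parallel, and antiparallel, to $\ell_1=\vx$, and they map to the two poles $\pm\vx$, as claimed.

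The main obstacle, I expect, lies in the geometric bookkeeping of the last paragraph: one must verify that letting the angle coordinate $\theta$ run over the full circle $\bS{1}$ genuinely records the two sides of the contact plane (equivalently, the linking number $\pm1$ of Definition \ref{dlinkvec}), so that the two annuli $\theta\in[0,\pi]$ and $\theta\in[\pi,2\pi]$ are exactly the two sheets $A$ and $B$ of the two-to-one map onto $\eL_{\vx}\setminus\{\vx\}$, joined precisely over the parallel configurations. Once this identification is in place, the homotopy-theoretic steps --- the retraction $C\simeq$ two spheres, the arc collapse, and the recognition of $\hat\varphi$ as the spherical parametrization on each sheet --- are routine.
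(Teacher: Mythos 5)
Your proposal is correct and takes essentially the same route as the paper: the paper's ``proof'' is exactly the preceding discussion (the explicit model \eqref{eqconfigone}, the retraction $\psi$ onto the new part $C$, and the identification of $C$ with a twice-pinched torus), and your arc-collapse argument, Mayer--Vietoris check, and spherical-coordinate computation of $\hat{\varphi}$ merely supply the details the paper leaves as assertions. One small slip worth noting: $\hat{\varphi}=\hat{\varphi}|_{C}\circ\psi$ does not hold on the nose, since for $\var>0$ the line $\ell_{2}$ lies at distance $2\var$ from the axis of the first cylinder and so differs from its $\var=0$ limit; but the homotopy $\var\mapsto(1-t)\var$ repairs this, which is all the up-to-homotopy statement of the proposition requires.
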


For the general case of two oriented lines \w{\ell\sb{1}} and \w{\ell\sb{2}}
in \w[,]{\RR{3}} assume first that in the linkage type \w{\TGn{2}} the line
\w{\ell_{1}} is \emph{framed} \wh that is, equipped with a chosen normal
direction $\vn$. The space of framed oriented lines in \w{\RR{3}} through
the origin is \w[,]{\SO{3}} so the space \w{\eL\sp{\fr}} of all framed
oriented lines is given as for $\eL$ by \w[,]{(\RR{3}\times\SO{3})/\RR{}}
and thus:
\begin{myeq}\label{eqconfigtwo}
\CGn{2}~=~\RR{3}\times\SO{3}\times\bS{1}\times\left([0,\infty)\times
\RR{1}\times\bS{1}\times\bS{1}\right)/\sim~,
\end{myeq}
\noindent where $\sim$ is generated by the equivalence relation of \w{\NrGn{1}}
together with $\RR{}$-translations along the first line \w[.]{\ell_{1}}
To forget the orientations, we divide further by \w[,]{\bS{0}\times\bS{0}} and
to forget the framing, we must further divide by the action of \w{\bS{1}} on $\vn$,
so \w[.]{\CGn{1}=\CGn{2}/\bS{1}}

%
%
\begin{prop}\label{ptwolines}
When \w{\Gamma\sb{1}} consists as above of two (oriented) lines, the completion
\w{\CGn{1}} and the blow-up \w{\tC(\Gamma\sb{1})} of the configuration space 
are homeomorphic.
\end{prop}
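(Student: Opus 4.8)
The plan is to show that the completion $\CGn{1}$ and the blow-up $\tC(\Gamma\sb{1})$ are homeomorphic by verifying that the canonical surjection $\hPhi\rest{\CGn{1}}:\CGn{1}\to\tC(\Gamma\sb{1})$ from Definition \ref{dblowup} is actually a bijection, and hence a homeomorphism (since $\CGn{1}$ is compact-ish enough locally, or since both are quotients of the same source). The map always exists and is continuous by Proposition \ref{pcontinuous}; since $\tC(\Gamma\sb{1})$ is by construction the image of $\hPhi$, surjectivity is automatic. So the real content is \emph{injectivity} of $\hPhi$ on $\CGn{1}$, which amounts to showing that the completed linking-number invariants $\phi\sb{\xi}$ of Definition \ref{dblowupr} already separate all the virtual configurations of $\Gamma\sb{1}$ that the completion $\dpath$ distinguishes.

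First I would reduce to the reduced/framed model already computed in \S\ref{cpgint}.A. By \eqref{eqconfigone} and \eqref{eqconfigtwo}, $\CGn{1}$ is an explicit quotient of $[0,\infty)\times\RR{1}\times\bS{1}\times\bS{1}$ (times the homogeneous factors $\RR{3}\times\SO{3}\times\bS{1}$ and the orientation/framing group actions), where the only identifications $\sim$ occur on the locus $\theta\in\{0,\pi\}$, i.e.\ where the second line becomes parallel to $\ell\sb{1}$. Away from this locus, a completed configuration records exactly the data $(\var,x,\phi,\theta)$ of two tangent $\var$-cylinders, and for $\var=0$ the limiting linking vector $\vw\sb{(\ell\sb{1},\ell\sb{2})}$ of Definition \ref{dlinkvec} — equivalently the sign $\phi\sb{\xi}\in\{\pm1\}$ — is precisely the datum ``on which side the second line touches,'' which is what the completion remembers. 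So on the complement of the parallel locus the invariant $\phi\sb{\xi}$ determines the point, and $\hPhi$ is injective there.

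The crux is therefore the parallel locus $\theta\in\{0,\pi\}$, where $\phi\sb{\xi}=0$ and the linking number carries no information. Here I would check directly against the equivalence relation in \eqref{eqconfigone}: in case (a), for $\var>0$ the coordinate $x\in\RR{1}$ is already collapsed, while in case (b), for $\var=0$, both $x$ and $\phi$ are collapsed. In each of these degenerate strata the completion $\CGn{1}$ has \emph{also} collapsed exactly those coordinates, so the fibers of $\hPhi$ over the parallel locus are single points rather than the three-element sets one might fear from the $\ZZ/3$ labelling in the edge-and-elbow case of \S\ref{sedgeelb}. The key structural reason is that for two lines there are no vertices and no coplanarity-with-an-elbow phenomena: when $\ell\sb{2}\parallel\ell\sb{1}$ there is genuinely only one way the two infinitely thin cylinders can sit tangentially (up to the rotation that gets collapsed), so no side-information is lost and $\phi\sb{\xi}=0$ faithfully encodes the unique parallel virtual configuration.

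I would then assemble these two cases into a global statement: $\hPhi\rest{\CGn{1}}$ is a continuous bijection, and because it is a map between the explicit quotient spaces of \eqref{eqconfigone}/\eqref{eqconfigtwo} built from the same manifold-with-boundary $[0,\infty)\times\RR{1}\times\bS{1}\times\bS{1}$ under \emph{identical} generating relations $\sim$, it is in fact the identity on representatives and therefore a homeomorphism. The main obstacle I anticipate is the bookkeeping at the parallel locus: one must confirm that the ``virtual'' identifications forced by $\dpath$ (the fact that $\var\cdot\phi$ measures the length of the rotation path, so that $\phi$ becomes irrelevant as $\var\to0$) coincide exactly with the identifications imposed by the equivalence relation defining $\hF$ — i.e.\ that setting $\phi\sb{\xi}=0$ on the parallel locus is neither too coarse (losing a distinction the completion keeps) nor too fine (keeping a distinction the completion collapses). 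Verifying this agreement stratum-by-stratum along $\theta\in\{0,\pi\}$ is where the argument must be done carefully, but the homogeneous factors $\RR{3}\times\SO{3}\times\bS{1}$ and the orientation/framing quotients play no role and can be carried along formally.
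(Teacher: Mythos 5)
Your proposal is correct and takes essentially the same route as the paper: the paper's proof is exactly your two-stratum analysis, observing that at a transverse intersection the linking number is eventually constant along (and agrees between) any equivalent Cauchy sequences, so $\hPhi$ separates those virtual configurations, while at coincident lines every Cauchy sequence is equivalent to one through parallel configurations, so the $\pm1$ labels are identified in the completion just as in the blow-up. Your additional detail \wh the explicit quotient model \eqref{eqconfigone} and the identity-on-representatives argument upgrading the continuous bijection to a homeomorphism \wh merely makes explicit what the paper's terse proof leaves implicit.
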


\begin{proof}
The virtual configurations in \w{\CGn{1}} are of two types, in which the two lines
intersect or coincide.
In the first case, the linking number of the two lines is the same constant in a
tail of any two Cauchy sequences.
In the second case, any Cauchy sequence is equivalent to one consisting only of
parallel configurations, and therefore the linking numbers must be identified.
\end{proof}

\supsect{\protect{\ref{cpgint}}.B}{Other pairs of generalized intervals}

In principle, all the other pairs of generalized intervals may be treated similarly.
We shall merely point out the changes that need to be made in various special
cases:

\begin{mysubsection}{Line and half-line}\label{slinehalf}
First, consider a linkage \w{\Gn{3}=\TGn{3}} consisting of an oriented line and
a half-line \w[.]{\vp} For the pointed reduced embedding and configuration spaces,
we may assume that \w{\vp} to be the positive direction of the $x$-axis, with
endpoint \w[.]{\bze\in\RR{3}}

We see that \w{\Emsr{\TGn{3}}=\NsrGn{3}} is the subspace of
\w{\Immers^{\red}(\TGn{1})} consisting of oriented lines \w{\ell=\ell_{2}} in
\w{\RR{3}} which do not intersect \w[,]{\vp} so in particular it
is contained in \w[.]{W:=\Immers^{\red}(\TGn{1})\setminus\{\vx\}}
Note that as in \S \ref{cpgint}.A, the lines in $W$ can be globally parameterized by
\w[,]{(\var,x,\phi,\theta)\in[0,\infty)\times\RR{1}\times\bS{1}\times\bS{1}}
module the equivalence relation
\w[.]{\theta\in\{0,\pi\}\Rightarrow \RR{1}\sim\ast} (The case \w{\var=0} allows
the line to intersect $\vx$).

If we let
\begin{myeq}\label{eqlinehalf}
Z~:=~\{[(\var,x,\phi,\theta)]\in
W\subseteq\left([0,\infty)\times\RR{1}\times\bS{1}\times\bS{1}\right)/\sim~:\
\var=0~\Rightarrow~x\leq 0\}
\end{myeq}
\noindent denote the subspace of \w{W\subset\Immers^{\red}(\TGn{1})}
consisting of lines which do not intersect \w[,]{(0,\infty)\subseteq\vp}
we see that the completed configuration space \w{\hEmsr{\TGn{3}}=\CsrGn{3}} is
the pushout:
\mydiagram[\label{eqpushout}]{
\ar @{} [drr]|<<<<<<<<<<<<<<<<<<<<<<<<<<<<<<<<<<<<{\framebox{\scriptsize{PO}}}
\{[(\var,x,\phi,\theta)]\in\NrGn{1}~|\ \theta\not\in\{0,\pi\}\Rightarrow 0<x\}\
\ar@{^{(}->}[d] \ar@{^{(}->}[rr]^<<<<<<<<{f} &&
Z \ar@{^{(}->}[d]  \\
\{[(\var,x,\phi,\theta)]\in\CrGn{1}~|\ \theta\not\in\{0,\pi\}\Rightarrow 0<x\}\
\ar@{^{(}->}[rr] && \CsrGn{3}~,
}
\noindent where $f$ is just the inclusion (using the same coordinates for source
and target).

The quotient map
\w{q_{\Gn{3}}:\CsrGn{3}\to\Immers^{\red}(\TGn{3})=\Immers^{\red}(\TGn{1})}
of \S \ref{rquotient} is induced by the
quotient map \w{q_{\Gn{1}}:\CrGn{1}\to\Immers^{\red}(\TGn{1})} and the inclusion
\w[.]{Z\hra\Immers^{\red}(\TGn{1})}

For the unreduced case, as above we first let \w{\TGn{4}} consist of an oriented
framed pair of line and half-line, so
\w{\CsGn{4}\cong\CsrGn{3}\times\Spin{3}\times\bS{1}} (with the natural basepoint),
and \w{\CsGn{3}} is obtained from \w{\CsGn{4}} by dividing out by
a suitable action of \w[.]{\bS{1}\times\bS{1}}
\end{mysubsection}

\begin{remark}\label{rsegline}
The discussion above carries over essentially unchanged to a linkage
\w{\TGn{5}} consisting of an oriented line \w{L_{1}} and a finite segment $I$:
in the reduced case, we assume \w{I=[0,a]} and then replace the conditions
\w{x\leq 0} and \w{0<x} in \wref{eqlinehalf} and
\wref{eqpushout} with \w{x\not\in(0,a)} and \w[,]{x\in(0,a)}
respectively.

This is the first example where the moduli function
\w{\lambda_{\TG}:\Emb(\TGn{5})\to\RR{1}} is defined. However, all the embedding
configuration spaces \w{\NaGn{5}} are homoeomorphic to each other, and
in fact \w[.]{\Emb(\TGn{5})\cong\NaGn{5}\times(0,\infty)}
\end{remark}

\begin{mysubsection}{Two half-lines}\label{stwohalf}
Let \w{\TGn{6}} be a linkage consisting of two oriented half-lines $\vp$ and $\vq$.
For the reduced pointed space of embeddings \w[,]{\Emsr{\TGn{6}}=\NsrGn{6}} we
now assume as before that \w{\vp} is the positive half of the $x$-axis
$X$, so each point \w{\bx\in\Emsr{\TGn{6}}} is determined by a choice of the vector
$\vv$ from the end point of the $\vq$ to the origin (i.e., the end point of $\vp$),
and the direction vector $\vw$ of the $\vq$. Thus \w{\Emsr{\TGn{6}}}
embeds as \w{(\vv,\vw)} in \w[.]{\RR{3}\times\bS{2}\down{2}}

The only virtual configurations we need to consider are when $\vp$ and $\vv$ are not
collinear, and span a plane $E$ containing $\vp$ and $\vq$. In this case $\vw$
is determined by a single rotation parameter \w{\theta=\theta\sb{\vv}} (relative to
$\vp$), and there are  \w{\theta\sb{0}<\theta\sb{1}} such that $\vq$ intersects
$\vp$ if and only if \w[.]{\theta\sb{0}<\theta\leq\theta\sb{1}} Thus we have an arc
\w{\gamma=\gamma\sb{\vv}} in \w{\bS{2}\down{2}} of values for $\vw$
for which we have two virtual configurations in \w[,]{\hEmb(\TGn{6})} yielding an
embedding of the compactification
\w{\overline{\bS{2}\down{2}\setminus\gamma\sb{\vv}}} of a sphere with a slit
removed in \w[.]{\hEmb(\TGn{6})} Since all such compactifications are
homeomorphic to \w[,]{\overline{\bS{2}\down{2}\setminus\gamma}} say, we see that
$$
\hEmb(\TGn{6})~\cong~X\times \bS{2}\down{2}~\cup
(\RR{3}\setminus X)\times\overline{\bS{2}\down{2}\setminus\gamma}~.
$$
\noindent Since \w{\overline{\bS{2}\down{2}\setminus\gamma}} is contractible, by
collapsing the $x$ axis $X$ to the origin we see that
\w[.]{\hEmb(\TGn{6})\simeq\bS{2}}
\end{mysubsection}

\begin{remark}\label{rbutwoline}
Along the way our analysis showed that for all types of pairs of generalized
intervals the maps \w{\hPhi:\hEmb(\TG)\to\tEmb(\TG)} and \w{\hPhi:\CG\to\tCG}
of \wref{eqsummaryt}-\wref{eqsummaryc} are homeomorphisms \wh that is,
the completed configuration space is identical with the blow-up. This does not hold
for more general linkage types \wh see \S \ref{rinflines} below.
\end{remark}

%
%
\section{Lines in $\RR{3}$}
\label{cdpoints}

In order to deal with more complex virtual configurations, we need to
understand the completed configuration spaces of $n$ lines in \w[.]{\RR{3}}

The configuration space of (non-intersecting) skew lines in \w{\RR{3}} have been
studied from several points of view (see \cite{CPennC,PennC} and the surveys
in \cite{DViroC,ViroT}). However, here we are mainly interested in
the completion of this space, and in particular in the virtual configurations
where the lines ``intersect'' (but still retain the information on their mutual
position, as before).

\begin{defn}\label{dcom}
As above, let \w{\eL=\NaGn{0}} denote the space of oriented lines in \w[,]{\RR{3}} 
and $\nL$ the space of \emph{all} lines in \w{\RR{3}} (so we have a double cover
\w[).]{\eL\epic\nL}

An oriented line is determined by a choice of a basepoint in \w{\RR{3}} and a vector 
in \w[,]{S^{2}} and since the basepoint is immaterial, the space $\eL$
of all oriented lines in \w{\RR{3}} is \w[,]{(\RR{3}\times S^{2})/\RR{}} where 
$\RR{}$ acts by translation of the basepoint along $\ell$.
Alternatively, we can associate to each oriented line $\ell$ the pair
\w[,]{(\vv,\bx)} where \w{\vv\in S\sp{2}} is the unit direction vector of $\ell$,
and $\bx\in\RR{3}$ is the nearest point to the origin on $\ell$,
allowing us to identify:
\begin{myeq}\label{eqorline}
\eL~\cong~\{(\vv,\bx)\in S\sp{2}\times\RR{3}~:\ \bx\cdot\vv=0\}~.
\end{myeq}

If \w{\Gamma\sb{n}=\TGn{n}} is the linkage consisting of $n$ oriented lines, we
thus have an isometric embedding $\pi$ of \w{\NaGn{n}=\Emb(\TGn{n})} in the product
\w[,]{\eLn{n}} which extends to a surjection \w{\hat{\pi}:\hEmb(\TGn{n})\to\eLn{n}}
(no longer one-to-one).

Denote by \w{\eLz{n}} the subspace of \w{\eLn{n}} consisting of all those lines passing through the origin (that is, with \w[),]{\bx=\vze} so
\w[,]{\eLz{n}\cong(S\sp{2})\sp{n}} and let \w[.]{\hE{n}:=\hat{\pi}\sp{-1}(\eLz{n})} Thus \w{\hE{n}} consists of all (necessarily virtual) configurations of $n$ lines passing through the origin.

We denote by $\Sigma$ the subspace of
\w{\eLn{n}\subseteq(S\sp{2}\times\RR{3})\sp{n}} for which at least two of
the $n$ unit vectors in \w{S\sp{2}} are parallel:
$$
\Sigma~:=~\{(\vv\sb{1},\bx\sb{1}\dotsc,\vv\sb{n},\bx\sb{n})\in S\sp{2})\sp{n}~:\
\exists 1\leq i<j\leq n \ \exists 0\neq \lambda\in\RR{}, \ \vv\sb{i}=\lambda\vv\sb{j}\}~.
$$
\noindent Finally, let \w{\hat{\Sigma}:=\hat{\pi}\sp{-1}(\Sigma)} denote the corresponding \emph{singular subspace} of \w[.]{\hEmb(\TGn{n})}
\end{defn}

%
%
\begin{prop}\label{pdefret}
There is a deformation retract \w[.]{\rho:\hEmb(\TGn{n})\to\hE{n}}
\end{prop}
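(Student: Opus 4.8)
The plan is to retract every line onto one through the origin by scaling its foot-point (its nearest point to the origin) linearly to $\bze$ while keeping the direction fixed, and then to show that this motion lifts canonically to the completion by carrying the linking-number decorations along unchanged. Concretely, the retraction will be the $t=1$ value of a strong deformation retraction built first on the base $\eLn{n}$ and then lifted through $\hat{\pi}$.

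First I would define the retraction downstairs. Using the identification \eqref{eqorline}, write an oriented line as a pair $(\vv,\bx)$ with $\bx\cdot\vv=0$, and set $H\colon\eLn{n}\times[0,1]\to\eLn{n}$ by $H_{t}\bigl((\vv_i,\bx_i)_{i=1}^{n}\bigr):=\bigl((\vv_i,(1-t)\bx_i)_{i=1}^{n}\bigr)$. Since $((1-t)\bx_i)\cdot\vv_i=0$ this is well defined, and it is a strong deformation retraction of $\eLn{n}$ onto $\eLz{n}\cong(S^{2})^{n}$: it fixes $\eLz{n}$ pointwise (there $\bx_i=\bze$ is unmoved) and lands in $\eLz{n}$ at $t=1$.

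The crucial observation is that $H$ preserves all the linking data of Definition \ref{dlinknum} for $t\in[0,1)$. The direction vectors $\vv_i$ are never changed, so whether a pair $(i,j)$ is parallel is constant in $t$; and for a non-parallel pair the signed quantity $(\bx_i-\bx_j)\cdot(\vv_i\times\vv_j)$ is simply multiplied by $(1-t)>0$, so its sign --- that is, the linking number of Definition \ref{dlinkvec} --- is constant on $[0,1)$. Hence each pair keeps its contact type throughout $[0,1)$: a pair that is skew at $t=0$ has its interline distance scaled by $(1-t)$ and stays skew until, at $t=1$, the two lines become concurrent at the origin, while a pair that already meets has triple product $0$, so it keeps meeting (at a moving point) and is concurrent at the origin at $t=1$. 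In particular no decoration ever has to change during the motion, and all contact transitions are deferred to the single limit $t=1$.

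This invariance lets me lift $H$ to $\tilde{H}\colon\hEmb(\TGn{n})\times[0,1]\to\hEmb(\TGn{n})$: over the open locus of pairwise-skew configurations, where the configuration is a genuine embedding and $\hat{\pi}$ is injective (Proposition \ref{pcovering}), I apply $H_{t}$ directly, the image staying in $\Emb(\TGn{n})$ for $t<1$; over a virtual configuration I keep the same decoration $\vec{\phi}$, which is legitimate because these invariants are locally constant on the covering $\tpi^{-1}(U)\to U$ and the side of approach is recorded faithfully by the sign above, so the lift is compatible with the relation $\sim$ of Definition \ref{dblowupr}. As $t\to1^{-}$ the underlying lines converge to lines through the origin while the decorations stay constant, so $\tilde{H}_{1}$ takes values in $\hE{n}=\hat{\pi}^{-1}(\eLz{n})$; together with $\tilde{H}_{0}=\Id$ and the fact that $\tilde{H}_{t}$ fixes $\hE{n}$ pointwise, this makes $\rho:=\tilde{H}_{1}$ the asserted (strong) deformation retraction. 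I expect the main obstacle to be continuity of $\tilde{H}$ at the boundary $t=1$ and across the singular locus $\hat{\Sigma}$, where the degree of $\hat{\pi}$ jumps as skew pairs collapse onto intersecting ones: there one must verify that preserving the labels is consistent with the Cauchy-sequence attaching data defining the new points, and the constancy of each pair's contact type on $[0,1)$ is precisely what should make this bookkeeping tractable.
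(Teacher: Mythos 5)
Your homotopy downstairs is exactly the paper's: the paper sets $h_t((\vv_1,\bx_1),\dotsc,(\vv_n,\bx_n)) := ((\vv_1,t\bx_1),\dotsc,(\vv_n,t\bx_n))$, which is your $H$ with the parameter reversed, and both arguments rest on the observation that for $t>0$ this is induced by the $t$-dilation of $\RR{3}$, hence carries $\Emb(\TGn{n})$ into itself. The divergence --- and the genuine gap --- is in how the homotopy is extended to the completion. You define the lift over a virtual configuration by ``keeping the same decoration $\vec{\phi}$,'' i.e.\ you model points of $\hEmb(\TGn{n})$ as immersed configurations decorated by linking numbers. But that model is the blow-up $\tEmb(\TGn{n})$ of Definition \ref{dblowup}, which is a quotient of $\hEmb(\TGn{n})$ via $\hPhi$, and nothing available at this point of the paper says $\hPhi$ is injective for $n\geq 3$ lines (Remark \ref{rbutwoline} covers only pairs of generalized intervals, and the decorated-spheres description of $\hE{3}$ in Section \ref{ctml} is derived only \emph{after} this proposition, using it). So ``same underlying immersion, same decoration'' does not specify a point of $\hEmb(\TGn{n})$: your $\tilde{H}$ is at best a homotopy on the blow-up. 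Moreover, you explicitly defer continuity of $\tilde{H}$ at $t=1$ and across $\hat{\Sigma}$ as ``the main obstacle,'' but that is precisely the content of the proposition, not residual bookkeeping.

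The paper closes exactly this gap by working at the level of Cauchy sequences rather than decorations: for $t>0$ the dilation extends to $\widehat{h}_t$ on the completion, and the endpoint map is defined by a diagonal argument --- given a Cauchy sequence $P\up{i}$ representing a virtual configuration and any sequence $t_i\to 0$, the rescaled sequence $h_{t_i}(P\up{i})$ is again Cauchy in $\Emb(\TGn{n})$, its foot points $t_i\bx_j\up{i}$ tend to $\bze$ because the $\bx_j\up{i}$ admit a common bound $K$, and the resulting point $P'\in\hE{n}$ is independent of the choice of $(t_i)$. That well-definedness check is what replaces your label-preservation argument, and it is the step your proposal is missing. Your sign computation, that $(\bx_i-\bx_j)\cdot(\vv_i\times\vv_j)$ is scaled by $(1-t)>0$ so each pair's contact type is constant on $[0,1)$, is correct and would be the right remark if one wanted to descend the retraction to the blow-up $\tEmb(\TGn{n})$; but for the completion itself it cannot substitute for the sequence-level argument.
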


\begin{proof}
For each \w{t\in[0,1]} we may use \wref{eqorline} to define a map
\w{h\sb{t}:\eLn{n}\to\eLn{n}} by setting
$$
h\sb{t}((\vv\sb{1},\bx\sb{1}),\dotsc,(\vv\sb{n},\bx\sb{n}))~:=~
((\vv\sb{1},t\bx\sb{1}),\dotsc,(\vv\sb{n},t\bx\sb{n}))~.
$$
\noindent For \w[,]{t>0} \w{h\sb{t}} is equivalent to applying the $t$-dilitation
about the origin in \w{\RR{3}} to each line in \w[.]{\TGn{n}} Thus it takes the
subspace \w{\Emb(\TGn{n})} of \w{\eLn{n}} to itself, and therefore extends
to a map \w[.]{\widehat{h}\sb{t}:\hEmb(\TGn{n})\to\hEmb(\TGn{n})}

Now consider a Cauchy sequence \w{\{P\up{i}\}_{i=0}\sp{\infty}} in
\w[,]{\Emb(\TGn{n})} of the form
\begin{myeq}\label{eqcauchy}
P\up{i}~=~((\vv\sb{1}\up{i},\bx\sb{1}\up{i}),\dotsc,
(\vv\sb{n}\up{i},\bx\sb{n}\up{i}))~,
\end{myeq}
\noindent converging to a virtual configuration \w[.]{P\in\hEmb(\TGn{n})} Choosing
any sequence \w{(t\sb{i})\sb{i=0}\sp{\infty}} in \w{(0,1]} converging
to $0$, we obtain a new Cauchy sequence
\w{\{h\sb{t\sb{i}}(P\sp{i})\}_{i=0}\sp{\infty}} with
$$
h\sb{t\sb{i}}(P\up{i})~=~((\vv\sb{1}\up{i},t\sb{i}\bx\sb{1}\up{i}),\dotsc,
(\vv\sb{n}\up{i},t\sb{i}\bx\sb{n}\up{i}))~,
$$
\noindent which is still a Cauchy sequence in \w[,]{\Emb(\TGn{n})} and furthermore
\w{\lim_{i\to\infty}~t\sb{i}\bx\sb{j}\up{i}=\bze} for all \w{1\leq j\leq n}
since the vectors \w{(\bx\sb{1}\up{i},\dotsc,\bx\sb{n}\up{i})} have a common bound  $K$ for all \w[.]{i\in\NN} Thus \w{\{h\sb{t\sb{i}}(P\sp{i})\}_{i=0}\sp{\infty}}
represents a virtual configuration \w{P'} in \w{\hEmb(\TGn{n})} with
\w[,]{\pi(P')\in\eLz{n}} and thus \w[.]{P'\in\hE{n}} Moreover, choosing
a different sequence \w{(t\sb{i})\sb{i=0}\sp{\infty}} yields the same \w[.]{P'}
Thus if we set \w[,]{\widehat{h}\sb{0}(P):=P'} we obtain the required map
\w[,]{\rho:=\widehat{h}\sb{0}:\hEmb(\TGn{n})\to\hE{n}} as well as a homotopy
\w{H:\hEmb(\TGn{n})\times[0,1]\to\hEmb(\TGn{n})} with
\w{H(P,t)=\widehat{h}\sb{t}(P)} for \w{t>0} \wwh and thus \w{H(-,1)=\Id} \wwh and
\w[.]{H(-,0)=\rho}
\end{proof}

%
%
\begin{cor}\label{cdefret}
The completed configuration space \w{\hEmb(\TGn{n})} of $n$ oriented lines in
\w{\RR{3}} is homotopy equivalent to the completed space \w{\hE{n}} of $n$
oriented lines through the origin.
\end{cor}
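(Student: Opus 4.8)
The plan is to deduce the corollary directly from Proposition~\ref{pdefret}, using the standard principle that a deformation retraction is a homotopy equivalence. Writing $\iota\colon\hE{n}\hra\hEmb(\TGn{n})$ for the inclusion, it suffices to exhibit homotopies witnessing $\rho\circ\iota\simeq\Id_{\hE{n}}$ and $\iota\circ\rho\simeq\Id_{\hEmb(\TGn{n})}$. The homotopy $H$ already constructed in the proof of Proposition~\ref{pdefret} supplies the second of these, since $\iota\circ\rho=H(-,0)$ and $H(-,1)=\Id_{\hEmb(\TGn{n})}$. For the first, I would show that $H$ in fact fixes the subspace $\hE{n}$ pointwise, so that $\rho\circ\iota=\Id_{\hE{n}}$ holds on the nose and $\rho$ is a \emph{strong} deformation retraction.

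The only point requiring verification is therefore that $H(-,t)=\widehat{h}\sb{t}$ restricts to the identity on $\hE{n}$ for every $t\in[0,1]$. This is immediate at the level of the underlying lines: a configuration lying in $\eLz{n}$ has $\bx\sb{j}=\vze$ for every $j$, so by the defining formula $h\sb{t}(\vv\sb{j},\vze)=(\vv\sb{j},t\vze)=(\vv\sb{j},\vze)$, that is, $h\sb{t}$ fixes $\eLz{n}$ pointwise for all $t$. To upgrade this to the virtual configurations comprising $\hE{n}=\hat{\pi}\sp{-1}(\eLz{n})$, I would note that the labels carried by such a configuration are the limiting linking numbers of an approximating Cauchy sequence, and that a positive dilation about the origin scales each linking vector $\vw$ by the factor $t>0$ while leaving the direction vectors $\vu\sb{i}$ unchanged; hence $\sign(\vw\cdot(\vu\sb{1}\times\vu\sb{2}))$ is unaffected, and every linking-number label is preserved. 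Thus $\widehat{h}\sb{t}$ genuinely fixes $\hE{n}$ pointwise, rather than merely mapping it into itself.

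Granting this, the conclusion is formal: $\rho\circ\iota=\Id_{\hE{n}}$, while $\iota\circ\rho=H(-,0)$ is homotopic to $H(-,1)=\Id_{\hEmb(\TGn{n})}$ through $H$, so $\iota$ and $\rho$ are mutually inverse homotopy equivalences and $\hEmb(\TGn{n})\simeq\hE{n}$. I do not anticipate any real obstacle here, since Proposition~\ref{pdefret} has done the substantive work of producing $\rho$ and $H$; the only care needed is the bookkeeping that positive dilations preserve the discrete linking-number data, which is exactly what guarantees that the retraction is strong and hence that the inclusion is a homotopy equivalence.
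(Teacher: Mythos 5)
Your overall architecture is the same as the paper's: Corollary \ref{cdefret} carries no separate argument there, being the formal consequence of Proposition \ref{pdefret} that you spell out, and your observation that the homotopy $H$ from that proof supplies $\iota\circ\rho\simeq\Id_{\hEmb(\TGn{n})}$ is exactly right. However, the one step you add --- upgrading ``$h_t$ fixes $\eLz{n}$'' to ``$\widehat{h}_t$ fixes $\hE{n}$ pointwise'' by appealing to linking-number labels --- is not valid as stated. Points of the completion $\hEmb(\TGn{n})$ are equivalence classes of Cauchy sequences under the path metric $\dpath$ (Definitions \ref{dpathemtr}--\ref{dcemb}); they are \emph{not} pairs (immersed configuration, vector of linking numbers). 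That description belongs to the blow-up $\tEmb(\TGn{n})$, which is a quotient of the image of $\hPhi$, and the paper establishes that $\hPhi$ is injective only for pairs of generalized intervals (Remark \ref{rbutwoline}, with the explicit caveat that this fails for more general linkage types). So showing that a dilation preserves every $\phi_{\xi}$ only shows that $\widehat{h}_t(P)$ and $P$ have the same image in the blow-up; it does not show they are the same point of $\hEmb(\TGn{n})$.

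The claim itself is true and is repaired by a metric estimate in the spirit of the proposition's own proof (where it is checked that different choices of $(t_i)$ yield the same $P'$). If $P\in\hE{n}$ is represented by a Cauchy sequence $(P^{(i)})$ as in \eqref{eqcauchy} with $\bx_j^{(i)}\to\vze$ for all $j$, connect $P^{(i)}$ to $h_t(P^{(i)})$ by the path $s\mapsto h_s(P^{(i)})$, $s\in[t,1]$: this stays in $\Emb(\TGn{n})$ (dilations preserve embeddedness for $s>0$) and has length bounded by a constant times $(1-t)\max_j\|\bx_j^{(i)}\|$, which tends to $0$. Hence $(h_t(P^{(i)}))$ and $(P^{(i)})$ are $\dpath$-equivalent Cauchy sequences and $\widehat{h}_t(P)=P$, giving your strong retraction honestly. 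Alternatively, you can bypass pointwise fixing altogether: since $\hat{\pi}\circ\widehat{h}_t=h_t\circ\hat{\pi}$ and $h_t$ fixes $\eLz{n}$, each $\widehat{h}_t$ maps $\hE{n}$ into itself, so restricting $H$ to $\hE{n}\times[0,1]$ already yields $\rho\circ\iota\simeq\Id_{\hE{n}}$, and the homotopy equivalence follows without any claim about labels.
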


%
%
\section{Three lines in $\RR{3}$}
\label{ctml}

Corollary \ref{cdefret} allows us to reduce the study of the homotopy type of the
completed configuration space of $n$ (oriented) lines in \w{\RR{3}} to the that
of the simpler subspace of $n$ lines through the origin (where we may fix
\w{\ell\sb{1}} to be the $x$-axis).

\begin{mysubsection}{The case of two lines again}\label{stlines}
For \w[,]{n=2} the remaining (oriented) line \w{\ell\sb{2}} is determined by its
direction vector \w[,]{\vv\in\bS{2}} which is aligned with \w{\ell\sb{1}} at the
north pole, say, and reverse-aligned at the south pole. Since we need to take
into account the
linking number \w{\pm1\in\ZZ/2}of \w{\ell\sb{1}} and \w[,]{\ell\sb{2}} we
actually have two copies of \w[.]{\bS{2}} However, the north and south poles
of these spheres, corresponding to the cases when \w{\ell\sb{2}} is aligned
or reverse-aligned with \w[,]{\ell\sb{1}} must be identified as in Figure
\ref{ftwospheres}, so we see that
\w[,]{\hE{2}\simeq\bS{2}\vee\bS{2}\vee\bS{1}} as in
Proposition \ref{ptwoline}.

%
%
\begin{figure}[htb]
\begin{center}
\scalebox{0.4}{\includegraphics{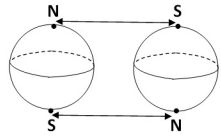}}
\end{center}
\caption{The completed configuration space \ $\hE{2}$}
\label{ftwospheres}
\end{figure}
\end{mysubsection}

\begin{mysubsection}{The cell structure for three lines}
\label{scsthlines}

For \w[,]{n=3} we again fix \w{\ell\sb{1}} to be the positively oriented $x$-axis.
The remaining two lines \w{\ell\sb{2}} and \w{\ell\sb{3}} are determined by
their two direction vectors
\w[.]{(\vv\sb{2},\vv\sb{3})\in\bS{2}\sb{(1)}\times\bS{2}\sb{(2)}}
Again all in all there are eight copies of \w[,]{\bS{2}\times\bS{2}}
indexed by the triples of completed linking numbers
\w{\delta\sb{i,j}:=\phi\sb{(\ell\sb{i},\ell\sb{j})}=\pm 1} for \w{1\leq i<j\leq 3}
(see \S \ref{dblowupr}).

As in \S \ref{stlines}, there are identifications among these products of two
spheres, which occur when at least one pair of
lines is aligned or reverse-aligned: this takes place either in the diagonal
\w{\diag(\bS{2}\sb{(1)}\times\bS{2}\sb{(2)})} (when  \w{\ell\sb{2}} and
\w{\ell\sb{3}} are aligned), in the anti-diagonal
\w{-\diag(\bS{2}\sb{(1)}\times\bS{2}\sb{(2)})} (when  \w{\ell\sb{2}} and
\w{\ell\sb{3}} are reverse-aligned), or in one of the four subspaces of
the form \w{\bS{2}\sb{(1)}\times\{N\}} (when  \w{\ell\sb{1}} and
\w{\ell\sb{2}} are aligned), and so on. Thus we have:
$$
\hE{3}~=~\left(\coprod\sb{1\leq i<j\leq 3}\
[\bS{2}\sb{(1)}\times\bS{2}\sb{(2)}]\sb{\delta\sb{i,j}}\right)/\sim~.
$$

The four special points \w[,]{(N,N),\,(N,S),\,(S,N),\,(S,S)} each appearing in
three of the identification spheres for each of the eight indices
\w[,]{\delta\sb{i,j}} must also be identified, as indicated by the arrows in
Figure \ref{fthreespheres}.

%
%
%
\begin{figure}[htb]
\begin{center}
\scalebox{0.4}{\includegraphics{css_fig24.jpg}}
\end{center}
\caption{Subspaces of \w{[\bS{2}\sb{(1)}\times\bS{2}\sb{(2)}]\sb{\delta\sb{i,j}}}
where identifications occur}
\label{fthreespheres}
\end{figure}

This suggests the following cell structure for  \w[,]{\hE{3}} in which we
decompose each of the eight copies of \w{\bS{2}\times\bS{2}} into $8$
four-dimensional cells, as follows:

Using cylindrical coordinates \w[,]{(\theta,t)} we think of
\w{\bS{2}} as a cylinder with the top and bottom identified to a point,
(i.e., a square with top and bottom collapsed and vertical sides identified
levelwise). As a result, \w{\bS{2}\times\bS{2}} may be viewed as a
product of the \ww{(t\sb{1},t\sb{2})}-square with the
\ww{(\theta\sb{1},\theta\sb{2})}-square (with suitable identifications), and its
eight cells are obtained by as products of their respective subdivisions,
indicated in Figure \ref{eqpairofpsheres}. Note that it is convenient to replace the
\ww{(\theta\sb{1},\theta\sb{2})}-square by a parallelogram, so the opposite
diagonal edges correspond to \w{\theta\sb{1}=\theta\sb{2}} (identified with
each other). The horizontal edges are also identified pointwise.

%
%
\begin{figure}[ht]
\begin{center}
\begin{tabular}{ccc}
\scalebox{0.3}{\includegraphics{css_fig25.jpg}}
&\hspace*{4mm}\raisebox{23mm}{\large $\times$} \hspace*{-10mm}&
\scalebox{0.3}{\includegraphics{css_fig26.jpg}}
\\
\hspace*{-10mm}(a)\ \ $(\theta\sb{1},\theta\sb{2})\in\bS{1}\times\bS{1}$
& &(b) \ \ $(t\sb{1},t\sb{2})\in[-1,1]\times[-1,1]$
\end{tabular}
\end{center}
\caption{$\bS{2}\sb{(1)}\times\bS{2}\sb{(2)}$ \ in double cylindrical
coordinates}
\label{eqpairofpsheres}
\end{figure}

Thus each of the eight copies of \w{\bS{2}\times\bS{2}} decomposes into $8$
$4$-dimensional cells:
\w[.]
{A\times K,B\times K,C\times K, D\times K,A\times L,B\times L,C\times L,D\times L}

However, there are certain collapses in the lower-dimensional products, all
deriving from the fact that when \w{t\sb{i}=\pm1} (at either end of the cylinder),
the variable \w{\theta\sb{i}} has no meaning, so under the quotient map
$$
q~=~q\sb{(1)}\times q\sb{(2)}~:~
\left(\bS{1}\sb{(1)}\times\bS{1}\sb{(2)}\right)\times
\left([-1,1]\sb{(1)}\times[-1,1]\sb{(2)}\right)~
\epic~\bS{2}\sb{(1)}\times\bS{2}\sb{(2)}
$$
\noindent any point \w{(\theta\sb{1},\theta\sb{2},-1,t\sb{2})} is sent to
\w[,]{(S\sb{(1)},q\sb{(2)}(\theta\sb{2},t\sb{2}))} where \w{S\sb{(1)}} is the
south pole in the first sphere \w[,]{\bS{2}\sb{(1)}} and so on. Thus:

\begin{enumerate}
\renewcommand{\labelenumi}{(\arabic{enumi})~}
\item The ostensibly $3$-dimensional cell \w{K\times s\sb{1}} is collapsed
horizontally under $q$ to the $2$-cell \w[.]{S\sb{(1)}\times\bS{2}\sb{(2)}}
Note that the same $2$-cell is also represented by \w{a\times s\sb{1}} and
\w[.]{b\times s\sb{1}}

Similarly \w{q(L\times s\sb{1})=S\sb{(1)}\times\bS{2}\sb{(2)}} and
\w[.]{q(K\times n\sb{1})=q(L\times n\sb{1})= N\sb{(1)}\times\bS{2}\sb{(2)}}
\item On the other hand, \w{K\times s\sb{2}} is collapsed horizontally to the
$2$-cell \w[,]{c\times s\sb{2}} and similarly
\w[,]{q(K\times n\sb{2})=q(c\times n\sb{2})}
\w[,]{q(L\times s\sb{2})=q(d\times s\sb{2})} and
\w[.]{q(L\times n\sb{2})=q(d\times n\sb{2})}
\item The sum \w{(c\cup d)\times s\sb{2}}
is identified under $q$ with \w[,]{\bS{2}\sb{(1)}\times S\sb{(2)}}
which is also represented by \w{a\times s\sb{2}} or \w[.]{b\times s\sb{2}}

Similarly,
\w[.]{q(c\cup d)\times n\sb{2})=
q(a\times s\sb{2})=q(b\times s\sb{2})=\bS{2}\sb{(1)}\times N\sb{(2)}}
\item The two $2$-cells \w{c\times s\sb{1}} and \w{d\times s\sb{1}}
are both collapsed under $q$ to the $1$-cell \w[,]{S\sb{(1)}\times H}
where $H$ is the longitude \w{\theta\sb{2}=0} in \w[.]{\bS{2}\sb{(2)}}
Similarly, \w[.]{q(c\times n\sb{1})=q(d\times n\sb{1})=N\sb{(1)}\times H}
\w[,]{c\times n\sb{1}} and \w{d\times n\sb{1}}
\item Since $V$ corresponds to the pair of south poles \w{(S\sb{(1)},S\sb{(2)})}
in \w[,]{\bS{2}\sb{(1)}\times\bS{2}\sb{(2)}} \w{K\times V} and \w{L\times V}
are collapsed to a single point \w[.]{(S\sb{(1)},S\sb{(2)})}

Similarly,
\w[,]{q(K\times W)=q(L\times W)=(S\sb{(1)},N\sb{(2)})}
\w[,]{q(K\times X)=q(L\times X)=(N\sb{(1)},N\sb{(2)})}  and
\w[.]{q(K\times Y)=q(L\times Y)=(N\sb{(1)},S\sb{(2)})}
\end{enumerate}

In addition, there are identifications among cells associated to the eight $2$-spheres indexed by \w[.]{(\delta\sb{1,2},\,\delta\sb{1,3},\,\delta\sb{2,3})\in(\ZZ/3)^{3}}
These occur only for the $0$-, $1$-, and $2$-cells, when at least two of \w[,]{\ell\sb{1}} \w[,]{\ell\sb{2}} and \w{\ell\sb{3}} are aligned, so \w[.]{\delta\sb{i,j}=0} The resulting identifications
are as follows:

\begin{enumerate}
\renewcommand{\labelenumi}{(\alph{enumi})~}
\item The two $2$-cells \w[,]{a\times e} and \w{a\times g}
consist of pairs of points \w{(t\sb{i},\theta\sb{i})} \wb{i=1,2} with \w{t\sb{1}=t\sb{2}} and \w[,]{\theta\sb{1}=\theta\sb{2}}
so at all points in these cells \w{\ell\sb{2}} and \w{\ell\sb{3}} are aligned. Therefore, \w[,]{\delta\sb{2,3}=0} or equivalently, the corresponding cells in the two products \w{\bS{2}\times\bS{2}} indexed by the triples
\w{(\delta\sb{1,2},\delta\sb{1,3},+1)} and
\w{(\delta\sb{1,2},\delta\sb{1,3},-1)} are identified, for
each of the four choices of \w[.]{(\delta\sb{1,2},\delta\sb{1,3})\in\{\pm1\}\sp{2}}
\item Similarly, \w[,]{b\times f} and \w{b\times h} consist of
pairs of points \w{(t\sb{i},\theta\sb{i})} \wb{i=1,2} with
\w{t\sb{1}=-t\sb{2}} and \w[,]{\theta\sb{1}=\theta\sb{2}}
so \w{\ell\sb{2}} and \w{\ell\sb{3}} are reverse-aligned, and
again the corresponding cells indexed by the triples
\w{(\delta\sb{1,2},\delta\sb{1,3},+1)} and
\w{(\delta\sb{1,2},\delta\sb{1,3},-1)} are identified.
\item The $2$-cell \w{a\times n\sb{1}} (identified with
\w{b\times n\sb{1}} consist of pairs of points with \w[,]{t\sb{1}=+1} so \w{\ell\sb{2}} is aligned with
\w{\ell\sb{1}} and the corresponding cells indexed by the triples \w{(+1,\delta\sb{1,3},\delta\sb{2,3})} and
\w{(-1,\delta\sb{1,3},\delta\sb{2,3})} are identified.
\item The $2$-cell \w{a\times s\sb{1}=b\times s\sb{1}} consist of pairs of points with \w[,]{t\sb{1}=-1} so \w{\ell\sb{2}} is reverse-aligned with
\w{\ell\sb{1}} and the corresponding cells indexed by the triples \w{(+1,\delta\sb{1,3},\delta\sb{2,3})} and
\w{(-1,\delta\sb{1,3},\delta\sb{2,3})} are identified.
\item The two $2$-cells \w[,]{c\times n\sb{2}} and
\w{d\times n\sb{2}} consist of pairs of points with \w[,]{t\sb{2}=+1} so \w{\ell\sb{3}} is aligned with
\w{\ell\sb{1}} and the corresponding cells indexed by the triples \w{(\delta\sb{1,2},+1,\delta\sb{2,3})} and
\w{(\delta\sb{1,2},-1,\delta\sb{2,3})} are identified.
\item The two $2$-cells \w[,]{c\times s\sb{2}} and
\w{d\times s\sb{2}} consist of pairs of points with \w[,]{t\sb{2}=-1} so \w{\ell\sb{3}} is reverse-aligned with
\w{\ell\sb{1}} and the corresponding cells indexed by the triples
\w{(\delta\sb{1,2},+1,\delta\sb{2,3})} and
\w{(\delta\sb{1,3},-1,\delta\sb{2,3})} are identified.
\item From (a) we see that the $1$-cell \w{a\times O}
indexed by\w{(\delta\sb{1,2},\delta\sb{1,3},+1)} and
\w{(\delta\sb{1,2},\delta\sb{1,3},-1)} are identified, and
similarly for \w{P\times e} and \w[.]{P\times g}
\item From (b) we see likewise that the $1$-cells
\w[,]{b\times O} \w[,]{Q\times f} and \w{Q\times h}
indexed by\w{(\delta\sb{1,2},\delta\sb{1,3},+1)} and
\w{(\delta\sb{1,2},\delta\sb{1,3},-1)} are also identified.
\item From (c) and (d) we see that the $1$-cells
\w[,]{P\times s\sb{1}=Q\times s\sb{1}}
\w[,]{P\times n\sb{1}=Q\times n\sb{1}} indexed by
\w{(+1,\delta\sb{1,3},\delta\sb{2,3})} and
\w{(-1,\delta\sb{1,3},\delta\sb{2,3})} are identified.
\item From (e) and (f) we see that the $1$-cells
\w[,]{P\times s\sb{2}} \w[,]{Q\times s\sb{2}}
\w[,]{P\times n\sb{2}} and \w{Q\times n\sb{2}} indexed by
\w{(\delta\sb{1,2},+1,\delta\sb{2,3})} and
\w{(\delta\sb{1,2},-1,\delta\sb{2,3})} are identified.
\item Finally, all six $0$-cells \w[,]{P\times V=Q\times V}
\w[,]{P\times W=Q\times W} \w[,]{P\times X=Q\times X} \w[,]{P\times Y=Q\times Y} \w[,]{P\times O} and \w{Q\times O}
have all three lines \w[,]{\ell\sb{1}} \w[,]{\ell\sb{2}} and
\w{\ell\sb{3}} aligned or reverse-aligned, so
\w{\delta\sb{1,2}=\delta\sb{1,3}=\delta\sb{2,3}=0} and all eight copies are
identified.
\end{enumerate}

Using this cell decomposition, we can easily verify that
\w[,]{H\sb{4}\hE{3}~\cong~\ZZ^{8}}
\noindent where for each of the eight products \w{\bS{2}\times\bS{2}} indexed by
\w{(\delta\sb{1,2},\delta\sb{1,3},\delta\sb{2,3})\in\{\pm 1\}^{3}}
we have a copy of $\ZZ$ generated by the \emph{fundamental $4$-cycle}
\w[.]{\gamma\sb{((\delta\sb{1,2},\delta\sb{1,3},\delta\sb{2,3})}:=
(K+L)\times(A+B+C+D)}
It is also clear that \w{\hE{3}} is connected, so \w[.]{H\sb{0}\hE{3}~\cong~\ZZ}

We leave to the reader to verify that
\w[,]{H\sb{3}(\hE{3};\QQ)~\cong~\QQ\sp{6}} \w[.]{H\sb{2}(\hE{3};\QQ)
~\cong~\QQ\sp{12}} and \w[.]{H\sb{1}(\hE{3};\QQ)~\cong~\QQ\sp{9}}
\end{mysubsection}

%
%
\section{Chains in space}
\label{ccis}

We can use the basic building blocks of Sections \ref{cdpoints}-\ref{ctml}
to study some actual simple linkages, namely, those with a all vertices of valence
$\leq 2$, \ called \emph{chains}.  We begin with the simplest non-trivial
example\vsm:

\supsect{\protect{\ref{ccis}}.A}{Closed quadrilateral chains}

Let $\Gamma$ be a closed quadrilateral chain with vertices $a$,
$b$, $c$, and $d$, and length vector
\w[.]{\vel:=
(\ell\sb{1},\ell\sb{2},\ell\sb{3},\ell\sb{4})=(|ab|,\,|bc|,\,|cd|,\,|ad|)}
See Figure \ref{eqsubmechanism}(b)
We naturally assume the \emph{feasibility inequalities} on $\vel$ (generalized
triangle inequalities), which guarantee that \w{\CG} is non-empty.
In the generic case we have no equations
of the form \w{\ell\sb{1}=\pm\ell\sb{2}\pm\ell\sb{3}\pm\ell\sb{1}}
(which would allow the quadrilateral to be fully aligned).

In the reduced configuration space \w{\NsrG} (cf.\ \S \ref{srestr})
we assume that $a$ is fixed at the origin $\vze$, the link \w{ab} is in the
positive direction of the $x$-axis, so $b$ is fixed at the point
\w[.]{\vb:=(\ell\sb{2},0)} If we mod out by the \ww{\bS{1}}-action
rotating the link \w{ad} in space about the $x$-axis, we obtain the space
\w[,]{\oNsrG} whose points are represented by embeddings of $\Gamma$ for which
the link \w{ad} lies in the closed upper half plane $\eH$ in the
\wwb{x,y}plane.

\begin{mysubsect}{Local description of the singularities in $\CsrG$}
\label{ssingcsrg}

Consider the simpler linkage $\Delta$ consisting of the two links \w{ab} and \w{ad}
of lengths \w{\ell\sb{1}} and \w[,]{\ell\sb{4}} respectively, and with the
distance \w{bd} contained in the closed interval \w{I=[r,R]} for
\w{r:=|\ell\sb{2}-\ell\sb{3}|} and \w[.]{R=\ell\sb{2}+\ell\sb{3}}
See Figure \ref{eqsubmechanism}(a).
The corresponding reduced planar configuration space,
in which we require \w{ab} to lie on the $x$-axis and \w{ad} to lie in $\eH$,
is denoted by \w[,]{\CsrD} and there is a ``forgetful map''
\w[.]{\rho:\oNsrG\to\CsrD}

%
%
\begin{figure}[ht]
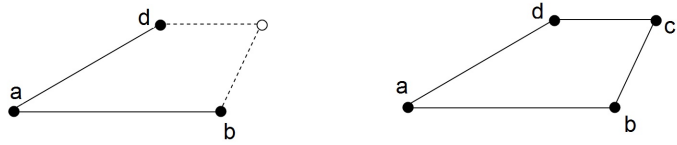

\begin{center}
\begin{tabular}{cc}
\scalebox{0.30}{\includegraphics{css_fig30.jpg}}
\hspace*{5mm}
& \scalebox{0.30}{\includegraphics{css_fig31.jpg}}\\
\hspace*{-55mm}(a) \ The submechanism $\Delta$ & \hspace*{15mm}(b) \
The $4$-chain $\Gamma$
\end{tabular}
\end{center}
\caption{The mechanisms $\Delta$ and $\Gamma$}
\label{eqsubmechanism}
\end{figure}

For a configuration $\bx$ in  \w{\oNsrG} the angle \w{\angle bcd} is determined by
the locations of $b$ and $d$, and thus by the corresponding configuration
\w[,]{\by=\rho(\bx)} and given any \w[,]{\by\in\CsrD}
there is a unique \w{\bbx\in\rho\sp{-1}(\by)} in which $a$ and $c$ do \emph{not} lie
on the same side of the line through \w{bd} (unless $a$, $b$, and $d$ are aligned).

Given this $\bbx$, in \w{\NsrG} the elbow $\Lambda$ formed by $b$, $c$ and $d$
can rotate freely about \w{bd} (unless it is aligned). However, when
we rotate $\Lambda$ from $\bbx$ by \w{180\sp{o}} back into the \wwb{x,y}plane,
the resulting (non-convex) configuration \w{\bbx'} may be self-intersecting,
if the two opposite sides \w{ab} and \w[,]{cd} or else \w{ad} and \w[,]{bc}
intersect in a point interior to one or the other.

By the analysis of planar quadrilateral configurations in \cite[\S 1.3]{FarbT},
we see that given such a self-intersecting planar configuration \w{\bbx'} of
$\Gamma$, one may decrease one angle between adjacent links to
obtain \w{\bx''\in\NsrG} (with angle \w[,]{0\sp{o}} with the two links aligned),
after which the self-intersection disappears.
Therefore, to study the cases in which $\Lambda$ cannot be fully
rotated in \w[,]{\RR{3}} it suffices to consider the configurations
\w{\bx''} where one link is folded onto an adjacent link. We call a case
where \w{bc} is folded back on \w{ab} a \emph{collineation} \w[.]{(acb)}
See Figure \ref{fcollin}.

%
%
\begin{figure}[htb]
\begin{center}
\scalebox{0.2}{\includegraphics{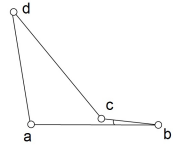}}
\end{center}
\caption{The collineation \ $(acb)$}
\label{fcollin}
\end{figure}

In the full reduced configuration space \w{\NsrG} we have two
angles associated to each configuration \w{\bbx\in\rho\sp{-1}(\by)} as above:
$\theta$ determined by rotating $\Lambda$ about \w[,]{bd} and $\phi$
by rotating the resulting rigid spatial quadrilateral (of \w[)]{\oNsrG} about the
$x$-axis (assuming that $a$, $b$ and $d$ are not collinear).

Note that for all spatial quadrilaterals in \w{\oNsrG} the rotation by $\phi$ is
possible, and yields different spatial configurations of $\Gamma$ (since we are
assuming $\Gamma$ cannot be fully aligned, because $\vel$ is generic). Moreover,
at a collineation \w{\bx''} the full rotation by $\theta$ about \w{bd} is still
allowed.

Any configuration \w{\bx''} representing a collineation \w[,]{(acb)} say,
has a neighborhood in the planar reduced configuration space \w{\oNsrG}
homeomorphic to an open interval \w{(\var,\var)} (with \w{\bx''} itself identified
with the midpoint $0$) such that in one half \w{[0,\var)}
the full rotation by $\phi$ is allowed, while in the other half \w{(-\var,0)}
the rotation by \w{\phi=0\sp{o}} is distinct in the completion (or blowup) \w{\CsrG}
from the rotation by \w[,]{\phi=360\sp{o}} in which the link \w{cd} touches
\w{ab} on opposite sides. Similarly for \w[,]{(acd)} \w[,]{(cab)} and \w[.]{(cad)}

However, when \w{\bx''} represents the collineation \w{(abd)} or \w[,]{(adb)}
the rotation by $\phi$ about the $x$-axis is the same as the rotation by $\theta$
about \w[.]{bd} In this case we simply exchange the roles of \w{bd} and \w{ac}
in defining \w[,]{\oNsrG} and then the same analysis holds. Similarly for
\w{(bdc)} and \w[.]{(dbc)}

Therefore, in the reduced completed spatial configuration space \w{\CsrG}
(in which the only restriction is that side \w{ab} lies on the $x$ axis, with $a$
at the origin) a collinea\-tion configuration \w{\bx''} has a neighborhood $U$
diffeomorphic (as a manifold with corners) to the union of the thickened torus
\w{U\sb{1}:=[0,\var)\times\bS{1}\times\bS{1}} and split thickened torus
\w[,]{U\sb{2}:=(-\var,0]\times [0\sp{o},360\sp{o}]\times\bS{1}} with
\w{(0,\theta,\phi)} in \w{U\sb{1}} identified with \w{(0,\theta,\phi)} in
\w{U\sb{2}}  (and thus in particular \w{(0,0\sp{o},\phi)} and \w{(0,360\sp{o},\phi)}
in \w{U\sb{1}} identified with \w{(0,0\sp{o},\phi)=(0,360\sp{o},\phi)} in
\w[).]{U\sb{2}} The singular
configuration \w{\bx''} is parameterized by the point \w[,]{(0,0\sp{o},0\sp{o})}
while the corresponding convex quadrilateral $\bx$
is parameterized by \w[.]{(0,180\sp{o},0\sp{o})}
\end{mysubsect}

\begin{remark}\label{rmodulifold}
If one link is folded onto an adjacent link, we obtain a triangle with sides
\w[,]{\ell\sb{i}} \w[,]{\ell\sb{j}} and
\w[,]{|\ell\sb{k}-\ell\sb{l}|} respectively
(for \w[).]{\{i,j,k,l\}=\{1,2,3,4\}} For this to be possible, the
three sides must satisfy the triangle inequalities. If we take into
account the feasibility inequalities, these reduce to two cases:

\begin{myeq}\label{eqfirstt}
\ell\sb{i}+\ell\sb{k}>\ell\sb{j}+\ell\sb{l}~,\hsp
\ell\sb{j}+\ell\sb{k}>\ell\sb{i}+\ell\sb{l}~,\hsp\text{and}\hsm \ell\sb{k}>\ell\sb{l}
\end{myeq}
\noindent or

\begin{myeq}\label{eqsecondt}
\ell\sb{j}+\ell\sb{l}>\ell\sb{i}+\ell\sb{k}~,\hsp
\ell\sb{i}+\ell\sb{l}>\ell\sb{j}+\ell\sb{k}~,\hsp\text{and}
\hsm\ell\sb{l}>\ell\sb{k}~.
\end{myeq}
\end{remark}

\begin{mysubsect}{Global description of \ $\NsrG$}
\label{sglobsrg}

As in the classical analysis of \cite{MTrinG}, \w{\CsrD} may be
identified with the intersection \w{\widehat{\be\bh}} of the half-annulus \w{A(d)}
in $\eH$ about $\vb$ of radii \w{r,R} with the half circle \w{B(d)} of radius
\w{\ell\sb{4}} about $\vze$ in the upper half-plane $\eH$ (both
describing possible locations for $d$). See Figure \ref{fredconf}.

%
%
\begin{figure}[htb]
\begin{center}
\scalebox{0.2}{\includegraphics{css_fig28.jpg}}
\end{center}
\caption{Reduced immersion space \w{\CsrD}}
\label{fredconf}
\end{figure}

We may assume without loss of generality that
\begin{myeq}\label{eqwlog}
\ell\sb{4}<\ell\sb{1}>\ell\sb{2}>\ell\sb{3}~,
\end{myeq}
\noindent so the collineations \w[,]{(dba)} \w[,]{(dbc)} or \w{(cab)} are impossible.

Thus \w{\widehat{\be\bh}} is an arc of \w[,]{B(d)} which is:

\begin{enumerate}
\renewcommand{\labelenumi}{(\roman{enumi})~}
%
\item The full half circle \w{B(d)} when
\begin{myeq}\label{eqfirst}
\ell\sb{1}+\ell\sb{4}<\ell\sb{2}+\ell\sb{3}
\end{myeq}
\noindent (so the leftmost point $\be$ of \w[,]{B(d)} corresponding the links \w{cb}
and \w{cd} being aligned in opposite directions, is in the annulus), and
\begin{myeq}\label{eqsecond}
|\ell\sb{2}-\ell\sb{3}|<\ell\sb{1}-\ell\sb{4}
\end{myeq}
\noindent (so the rightmost point $\bh$ of \w[,]{B(d)} corresponding to the
collineation \w[,]{(bdc)} is in the annulus).
%
\item A proper closed arc of \w{B(d)} ending at the $\bh$ on the positive $x$-axis
when \wref{eqfirst} is reversed and \wref{eqsecond} holds.
%
\item A proper closed arc of \w{B(d)} beginning at $\be$ on the negative $x$-axis
when  \wref{eqfirst} holds and \wref{eqsecond} is reversed.
\item A closed arc of \w{B(d)} not intersecting the $x$-axis when
\wref{eqfirst} and \wref{eqsecond} are reversed (so $\be$ and $\bh$ have positive
$y$-coordinates).
\end{enumerate}

Given a point \w{\by(d)} in this arc, the points $\bbx$ and \w{\bbx'} in
\w{\rho\sp{-1}(\by)\subseteq\oNsrG} are determined by the respective locations
\w{\bbx(c)} and \w{\bbx'(c)} of $c$, which are obtained by intersecting
the circle $E$ of radius \w{\ell\sb{3}} about \w{\by(d)} with the circle
\w{C(c)} of radius \w{\ell\sb{2}} about $\vb$.

Using the analysis in \S \ref{ssingcsrg} we see that a singular point \w{\bx''}
corresponding to the collineation \w{(bdc)} occurs in cases (iii) or (iv) above,
when \w{\bh=\bx''(d)} lies on the inner circle of the half annulus \w[.]{A(d)}
At this \w{\bx''} the rotation by $\theta$ about \w{bd} is trivial.

On the other hand, the collineation \w{(bda)} occurs when \w{\bx''(d)} lies in
the positive half of the $x$-axis, which is possible only in cases (i) or (ii).
The collineation \w{(acb)} occurs at $\bx$ when the circle $G$ of radius
\w{\ell\sb{3}} about the point \w{\bx(c):=(\ell\sb{1}-\ell\sb{2},0)} intersects
\w{\widehat{\be\bh}} at a point $\bbf$ (which is \w[).]{\bx(d)}

To determine when the two remaining mutually exclusive collineations \w{(acd)} or
\w{(cad)} occur, we must interchange the roles of
$d$ and $c$ and study the intersection of the circle \w{C(c)} of radius
\w{\ell\sb{2}} about \w{\vb=(\ell\sb{1},0)} with the inner circle of the annulus
\w{A(c)} about $\vze$ \wh that is with the circle $K$ of radius
\w{|\ell\sb{4}-\ell\sb{3}|} about the origin. Assume that these intersect at the two
points \w[,]{\{\vx\sb{0}(c),\vx\sb{1}(c)\}=K\cap C(c)} with corresponding
lines \w{X\sb{0}} and \w{X\sb{1}} through the origin. The intersections
\w{\bg\sb{0}} and \w{\bg\sb{1}} of these lines with \w{\widehat{\be\bh}}
yield the locations \w{\vx\sb{0}(d)} and \w[.]{\vx\sb{1}(d)}

Thus we can in principle obtain a full description of the
completed configuration space \w[.]{\CG} This will depend on the particular chamber
of the moduli space of all length vectors $\vel$ in \w{\RR{4}\sb{+}} \wwh that is,
which set of inequalities of the form \wref[,]{eqfirstt} \wref[,]{eqsecondt}
\wref[,]{eqfirst}and  \wref{eqsecond} occur (subject to \wref[).]{eqwlog}
\end{mysubsect}

\begin{example}\label{eglongsides}
A particularly simple type of quadrilateral $\Gamma$ is one which has ``three
long sides'' (cf.\ \cite[\S 1]{KMillM}). For instance, if we assume
\w{\ell\sb{1}=\ell\sb{2}=\ell\sb{4}=5} and \w[,]{\ell\sb{3}=1} the arc
\w{\widehat{\be\bh}} is parameterized by the angle \w{\alpha=\angle dab}

When $\alpha$ is maximal, we are at the aligned configuration $\be$, where
the rotation $\theta$ about \w{bd} has no effect, so the fiber of
\w{\rho:\oNsrG\to\CsrD} is a single point (see Figure \ref{eqthreelong}(a)).
Decreasing $\alpha$ slightly as in Figure \ref{eqthreelong}(b) yields a generic
configuration, with fiber \w[.]{\bS{1}} Figure \ref{eqthreelong}(c) represents
the point $\bg$ corresponding to the collineation \w[,]{(acd)} still with fiber
\w[.]{\bS{1}} Further decreasing $\alpha$ yields the self-intersection of
Figure \ref{eqthreelong}(d), with fiber \w[.]{[0\sp{o},360\sp{o}]} This
continues until the minimal $\alpha$ in Figure \ref{eqthreelong}(e), corresponding
to the collineation \w[,]{(bdc)} with trivial fiber.

%
%
\begin{figure}[ht]
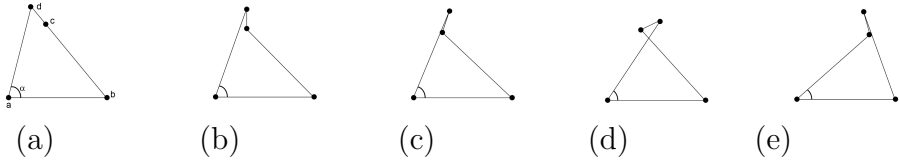

\begin{center}
\begin{tabular}{ccccc}
\scalebox{0.12}{\includegraphics{css_fig33a.jpg}}
\hspace*{5mm}
& \scalebox{0.12}{\includegraphics{css_fig33b.jpg}}
\hspace*{5mm}
&  \scalebox{0.12}{\includegraphics{css_fig33c.jpg}}
\hspace*{5mm}
&  \scalebox{0.12}{\includegraphics{css_fig33d.jpg}}
\hspace*{5mm}
&  \scalebox{0.12}{\includegraphics{css_fig33e.jpg}}\\
\hspace*{-15mm}(a) & \hspace*{-20mm}(b) & \hspace*{-20mm}(c) &\hspace*{-20mm}(d) &
\hspace*{-20mm}(e)
\end{tabular}
\end{center}
\caption{A quadrilateral with three long sides}
\label{eqthreelong}
\end{figure}

Thus \w{\oNsrG} is homeomorphic to the slit sphere
\w{\overline{\bS{2}\setminus\widehat{\bg\bh}}} (including the two edges of the cut),
and the full reduced configuration space
\w{\CsrG} is \w[.]{\overline{\bS{2}\setminus\widehat{\bg\bh}}\times\bS{1}}
Finally, the unreduced configuration space is
$$
\w{\CG}~\cong~\overline{\bS{2}\setminus\widehat{\bg\bh}}~\times~
\bS{1}~\times~\bS{2}~\times~\RR{3}~,
$$
\noindent since the quadrilateral can never be full aligned\vsm.
\end{example}

\supsect{\protect{\ref{ccis}}.B}{Open chains}

For an open chain \w{\Gop{2}} with two links (cf.\ \S \ref{egchain}), we have
\w{\Emb{\T_{\Gop{2}}}=\hEmb{\T_{\Gop{2}}}} and \w[,]{\Na{\Gop{2}}=\hC(\Gop{2})} since
no self-intersections exist in our model. Moreover,
\w{\Emsr{\T_{\Gop{2}}}\cong\bS{2}\times(0,\infty)\times(0,\infty)} and
\w[,]{\Nsr{\Gop{2}}\cong\bS{2}} and we can choose spherical coordinates
\w{(\theta,\phi)} for \w[,]{\bS{2}} where $\theta$ is the rotation of the second
edge about the first. The coordinates
\w{(\ell\sb{1},\ell\sb{2})\in(0,\infty)\times(0,\infty)} are the link lengths.

For an open chain \w{\Gop{3}} with three links, we first consider the simplified
case where the first and third link have infinite length: that is, the linkage
\w{\hGop{3}} has two half-lines $\vp$ and $\vq$, whose ends are joined by an
interval $I$.

If we do not specify the length of $I$, this linkage type is equivalent to
\w{\TGn{6}} of \S \ref{slinehalf}. The specific mechanism $\Gamma$ with
\w{|I|=\ell\sp{2}} corresponds to choosing the vector $\vw$ in \S \ref{slinehalf}
to be of length \w[,]{\ell\sp{2}} thus replacing \w{\RR{3}} there by a sphere
\w[,]{\bS{2}} and replacing $X$ by its two poles $N$ and $S$. Thus we see that
$$
\Nr{\hGop{3}}~\simeq~\Nsr{\hGop{3}}~\simeq~\bS{2}\vee\bS{2}\vee\bS{2}~.
$$

The case where \w{\ell\sb{1}} is finite and \w{\ell\sb{3}} is infinite is analogous.
When both are finite, we must distinguish the case when
\w{\ell\sb{1}+\ell\sb{3}>\ell\sb{2}} (again analogous to the infinite case) from
that in which \w{\ell\sb{1}+\ell\sb{3}\leq\ell\sb{2}} (in which case
\w[).]{\Nsr{\hGop{3}}\cong\bS{2}\times\bS{2}}

The analysis of open chains with more links requires a more complicated analysis of
the moduli space of link lengths (see below).

%
%
\section{Appendix: \ Spaces of paths}
\label{aspath}

There is yet another construction which can be used to describe the virtual
configurations of a linkage $\Gamma$, which we include for completeness, even though
it is not used in this paper.

\begin{defn}\label{dequot}
Given a linkage type \w{\TG} with \w[,]{\Emb(\TG)\subseteq(\RR{3})\sp{V}} let
\w{P(\TG)} denote the space of paths \w{\gamma:[0,1]\to(\RR{3})\sp{V}}
such that \w{\gamma((0,1])\subseteq\Emb(\TG)} (cf.\ \cite[Ch.\ 3]{LeeRM}),
and let \w{\eval\sb{0}:P(\TG)\to(\RR{3})\sp{V}} send \w{[\gamma]}
to \w[.]{\gamma(0)}
Let \w{E(\TG)} denote the set of homotopy classes of such paths relative to
\w[.]{\bx=\gamma(0)} This is a quotient space of \w[,]{P(\TG)} with
\w{\oev\sb{0}:E(\TG)\to(\RR{3})\sp{V}} induced by \w[.]{\eval\sb{0}}
We shall call \w{E(\TG)} the \emph{path space of embeddings} of \w[.]{\TG}

Similarly, let \w{P(\Gamma)\subseteq P(\TG)} denote the space of paths
\w{\gamma:[0,1]\to(\RR{3})\sp{V}} such that \w[,]{\gamma((0,1])\subseteq\NaG}
with \w{E(\Gamma)} the corresponding set of relative homotopy classes (a
quotient of \w[).]{P(\Gamma)} We call \w{E(\Gamma)} the \emph{path space of
configurations} of $\Gamma$.
\end{defn}

\begin{lemma}\label{lgerm}
The completed space of embeddings \w{\hEmb(\TG)} is a quotient of the
path space \w[,]{E(\TG)} and \w{\CG} is a quotient of \w[.]{E(\Gamma)}
\end{lemma}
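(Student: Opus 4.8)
The plan is to build a natural surjection $\Psi:E(\TG)\to\hEmb(\TG)$ that records the germ of a path at its initial endpoint, and then to verify it is a quotient map; the statement for $\CG$ will follow verbatim with $\NaG$ in place of $\Emb(\TG)$ and $E(\Gamma)$ in place of $E(\TG)$.

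First I would define the map at the level of paths. Given $\gamma\in P(\TG)$ I may assume, by Remark \ref{rsmooth}, that $\gamma$ is smooth, so that its germ at $0$ is rectifiable. For any sequence $t_i\downarrow 0$ the restriction $\gamma|_{[t_j,t_i]}$ is a path in $\Emb(\TG)$ joining $\gamma(t_i)$ to $\gamma(t_j)$, so its length bounds $\delta'(\gamma(t_i),\gamma(t_j))$; since the length of the tail $\gamma|_{(0,\var]}$ tends to $0$ as $\var\to 0$, the sequence $(\gamma(t_i))_i$ is $\dpath$-Cauchy, and interleaving two such sequences shows the resulting point of the completion does not depend on the choice of $(t_i)$. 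This defines $\Psi(\gamma)\in\hEmb(\TG)$, compatibly with evaluation at $0$: the image of $\Psi(\gamma)$ under the projection $\hEmb(\TG)\to(\RR{3})^{V}$ of Definition \ref{dcemb} is exactly $\oev_0([\gamma])=\gamma(0)$.

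Next I would check surjectivity and descent to $E(\TG)$. For surjectivity, a point of $\hEmb(\TG)$ is by definition represented by a $\dpath$-Cauchy sequence $(x_i)$ in $\Emb(\TG)$, which after passing to a subsequence satisfies $\dpath(x_i,x_{i+1})<2^{-i}$; choosing paths $\sigma_i$ in $\Emb(\TG)$ from $x_i$ to $x_{i+1}$ of length $<2^{-i}$ and concatenating them on a sequence of intervals shrinking to $0$ produces a path $\gamma\in P(\TG)$ (smoothed via Remark \ref{rsmooth}) with $\Psi(\gamma)$ equal to the given point. For descent, suppose $\gamma_0\simeq\gamma_1$ rel $\gamma(0)$ through a homotopy $H$ with $H(\,\cdot\,,0)\equiv\bx$ constant and $H(s,t)\in\Emb(\TG)$ for $t>0$; after smoothing $H$ (keeping the edge $t=0$ fixed at $\bx$), the slice $s\mapsto H(s,t)$ is a path in $\Emb(\TG)$ from $\gamma_0(t)$ to $\gamma_1(t)$ whose length $\int_0^1|\partial_s H(s,t)|\,ds$ tends to $0$ as $t\to 0$, because $\partial_s H(s,0)\equiv 0$. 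Hence $\dpath(\gamma_0(t),\gamma_1(t))\to 0$, the two Cauchy sequences are equivalent, and $\Psi$ factors through a surjection $\bar\Psi:E(\TG)\to\hEmb(\TG)$.

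Finally I would show $\bar\Psi$ is a quotient map. Continuity is the easy direction: paths that are close in the compact-open topology of $P(\TG)$ and agree at $0$ have $\dpath$-close germs, so $\bar\Psi$ is continuous. The main obstacle is the reverse inclusion of topologies — proving that the metric-completion topology on $\hEmb(\TG)$ coincides with the final topology induced by $\bar\Psi$, i.e.\ that a saturated subset whose image is non-open pulls back to a non-open set of path classes. I expect to handle this by producing, for each $w\in\hEmb(\TG)$ and each $\var$-ball about it, a continuously varying family of representing paths realizing all nearby completion points, so that any sequence converging to $w$ in the metric lifts to a convergent sequence in $E(\TG)$; the delicate case is when $w$ is a virtual configuration, where the distinct limiting germs (recorded by the tangent-direction decomposition used just after this lemma) must be lifted coherently. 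The argument for $\CG$ is identical, replacing $\Emb(\TG)$ by the open subspace $\NaG\subseteq\CiG$ and using $\dpath|_{\NaG}$.
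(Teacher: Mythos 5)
Your construction coincides with the paper's own proof: the paper likewise defines $\phi\colon E(\TG)\to\hEmb(\TG)$ by extracting a $\dpath$-Cauchy sequence from (a smooth representative of) each path, and it proves surjectivity by exactly your argument \wh choose $n_k$ with $\dpath(x_i,x_j)<2^{-k}$ for $i,j\geq n_k$, concatenate connecting paths of length $\leq 2^{-k}$, and smooth via Remark \ref{rsmooth}. Your descent step is in fact \emph{more} careful than the paper's, which merely asserts that homotopic paths yield equivalent Cauchy sequences; your slice-length estimate $\int_0^1\lvert\partial_s H(s,t)\rvert\,ds\to 0$ (from $\partial_s H(s,0)\equiv 0$ after smoothing $H$) supplies the missing reason. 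Likewise, the ``main obstacle'' you isolate at the end \wh that a continuous surjection need not induce the final topology \wh is not addressed in the paper at all: its proof stops at ``Thus $\phi$ is surjective,'' so it effectively reads ``quotient'' as ``continuous surjection''; relative to the paper's own standard you have omitted nothing, and you have honestly flagged a step the paper skips.

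One assertion of yours is, however, false as stated: that paths close in the compact-open topology and agreeing at $0$ have $\dpath$-close germs. For the two-line linkage of \S \ref{cpgint}.A, let $\gamma_0(t)$ place the second line at height $t\var$ above the first line and $\gamma_1(t)$ at height $-t\var$ below it: these agree at $0$ and are uniformly $2\var$-close, yet $\Psi(\gamma_0)$ and $\Psi(\gamma_1)$ are the two \emph{distinct} virtual configurations (touching from above versus from below), whose distance in the completion is a positive constant independent of $\var$. Worse, splicing such a tail onto $\gamma_0$ below parameter $1/n$ produces paths converging uniformly to $\gamma_0$ whose images under $\Psi$ sit constantly at the wrong virtual point, so with the compact-open quotient topology on $E(\TG)$ the map is not even sequentially continuous at virtual germs \wh continuity is not ``the easy direction'' but requires either a finer topology on $P(\TG)$ (e.g., one controlling lengths of path tails) or a set-theoretic reading of the lemma. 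This gap is shared with the paper, which asserts continuity of $\phi$ without argument; apart from it, your proposal is the paper's proof, with added rigor in the descent step.
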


\begin{proof}
First note that when \w[,]{\gamma(0)\in\Emb(\TG)} the path $\gamma$ is completely
contained in the open subspace \w{\Emb(\TG)} of \w[,]{(\RR{3})\sp{V}} so we may
represent any homotopy \w{[\gamma]} by a path contained wholly in an open ball around
\w{\gamma(0)} inside \w[,]{\Emb(\TG)} and any two such paths are linearly
homotopic. Thus \w{\oev\sb{0}} restricted to \w{\Emb(\TG))} is a homeomorphism.

In any completion $\hat{X}$ of a metric space \w[,]{(X,d)} the new points can be
thought of as equivalence classes of Cauchy sequences in $X$.
Since we can extract a Cauchy sequence  (in the path metric) from any path
$\gamma$ as above, and homotopic paths have equivalent Cauchy sequences, this
defines a continuous map  \w[.]{\phi:E(\TG)\to\hEmb(\TG)}

In our case, \w{X=\Emb(\TG)} also has the structure of a manifold, and given any
Cauchy sequence \w{(x\sb{i})\sb{i=1}\sp{\infty}} in $X$, choose an increasing
sequence of integers \w{n\sb{k}} such that \w{\dpath(x\sb{i},x\sb{j})<2\sp{-k}}
for all \w[.]{i,j\geq n\sb{k}} By definition of \w[,]{\dpath} we have a path
\w{\gamma\sp{k}} from \w{x\sb{n\sb{k}}} to \w{x\sb{n\sb{k+1}}} of length
\w[.]{\leq 2\sp{-k}} By concatenating these and using Remark \ref{rsmooth}, we
obtain a smooth path $\gamma$ along which all \w{(x\sb{i})\sb{i=1}\sp{\infty}}
lies. Thus we may restrict attention to Cauchy sequences
\w{(x_{i})_{i=1}^{\infty}=(\gamma(t_{i}))_{i=1}^{\infty}} lying on a smooth path
$\gamma$ in $X$. We can parameterize $\gamma$ so that
\w[,]{\gamma((0,1])\subseteq\Emb(\TG)} and let
\w[,]{\gamma(0):=\lim\sb{i\to\infty}\,x_{i}\in(\RR{3})\sp{V}} which exists since
\w{(\RR{3})\sp{V}} is complete.  Thus $\phi$ is surjective.
\end{proof}

We may thus summarize the constructions in this paper in the following diagram,
generalizing \wref{eqsummaryt}:
\mydiagram[\label{eqsummarytf}]{
&& \Emb(\TG) \ar@{^{(}->}[lld] \ar@{^{(}->}[d] \ar@{^{(}->}[rrd]
\ar@{^{(}->}[rr]^{\Phi} &&\F=(\RR{3})\sp{V}\times(\ZZ/3)\sp{\cP}
\ar@{->>}[rd]\sp{q}&\\
P(\TG)\supseteq  E(\TG) \ar@{->>}[rr]\sp{\phi} \ar[rrd]\sp{\eval\sb{0}}&&
\hEmb(\TG) \ar@{->>}[rr]\sp<<<<<<<<<<<{\hPhi} \ar[d]\sp{\widehat{\pi}}&&
\tEmb(\TG) \ar[lld]\sb{\tpi} \ar@{^{(}->}[r] & \hF \ar[llld]^{\hpi}\\
&& (\RR{3})\sp{V}&&
}
\noindent Similarly for the various types of configuration spaces shown in
\wref[.]{eqsummaryc}

\begin{remark}\label{rinflines}
If we allow a linkage type \w{\T=\TGn{\infty}} consisting of a countable number
of lines, we show that the maps \w{\phi:E(\TG)\to\hEmb(\TG)} and
\w{\phi:E(\Gamma)\to\CG} need not be one-to-one, in general:

Consider the set $S$ of configurations $\bx$ of $\T$ in which the first line
\w{\bx(\ell\sb{0})} is the $x$-axis in \w[,]{\RR{3}} and all other lines
\w{\bx(\ell\sb{n})} \wb{n=1,2,\dotsc} are perpendicular to the \wwb{x,y}plane,
and thus determined by their intersections \w{\bz(\ell\sb{n})} with the
\wwb{x,y}plane, with \w{\bz(\ell\sb{n})=(0,1/n)} for \w[.]{n\geq 2}
Thus the various configurations in $S$ differ only in the location of
\w[.]{\bz(\ell\sb{1})}

Now define the following two Cauchy sequences \w{(\bx'\sb{k})\sb{k=1}\sp{\infty}}
and \w{(\bx''\sb{k})\sb{k=1}\sp{\infty}} in
\w[:]{S\subseteq\Emb(\TGn{\infty})=\C(\Gn{\infty})} we let
\w{\bz'\sb{k}(\ell\sb{1}):=(1/k,1/k)} for \w[,]{\bx'\sb{k}} while
\w{\bz''\sb{k}(\ell\sb{1}):=(-1/k,1/k)} for \w[.]{\bx''\sb{k}} Since we can
define a path in $S$ between \w{\bx'\sb{k}} and \w{\bx''\sb{k}} of length $<3/k$,
the two Cauchy sequences are equivalent, and thus define the same point in the
completions \w[.]{\hEmb(\TGn{\infty})=\CGn{\infty}} However,  if we embed
\w{(\bx'\sb{k})\sb{k=1}\sp{\infty}} in a path \w{\gamma':[0,1]\to S} defined by
the intersection \w[,]{\bz'\sb{t}(\ell\sb{1}):=(t,t)} and similarly for
\w[,]{(\bx''\sb{k})\sb{k=1}\sp{\infty}} it is easy to see that \w{\gamma'} and
\w{\gamma''} cannot be homotopic (relative to endpoints), so they define
different points in \w{E(\TG)} or \w[.]{E(\Gamma)}
\end{remark}

%
%

%

\begin{thebibliography}{ABCDE}
%
\bibitem[CEGSS]{CEGSStolL}
B.~Chazelle, H.~Edelsbruner, K.J.~Guibas, M.~Sharir, \& J.~Stolfi,
``Lines in Space:  Combinatorics and Algorithms'',\hsm
\textit{Algorithmica} \textbf{15} (1996), pp.~428-447.
%
\bibitem[CP]{CPennC}
H.~Crapo \& R.J.~Penne,
``Chirality and the isotopy classification of skew lines in projective $3$-space'',
\textit{Adv.\ Math.} \textbf{103} (1994), pp.~1-106.
%
\bibitem[CF]{CFoI}
R.H.~Crowell \& R.H.~Fox",
\textit{Introduction to Knot Theory}, Springer, Berlin-\-New York, 1963.
%
\bibitem[DV]{DViroC}
Yu.V.~Drobotukhina \& O.Ya.~Viro,
`` Configurations of skew-lines'',\hsm
\textit{Algebra i Analiz}\textbf{1} (1989), pp.~222-246.
%
\bibitem[Fa]{FarbT}
M.S.~Farber,
\textit{Invitation to Topological Robotics},\hsm
European Mathematical Society, Zurich, 2008.
%
\bibitem[FTY]{FTYuzvT}
M.\v{S}.~Farber, S.~Tabachnikov, \& S.A.~Yuzvinski{\u{\i}},
``Topological robotics: motion planning in projective spaces'',
\textit{Int.\ Math.\ Res.\ Notices} \textbf{34} (2003), 1853-1870.
%
\bibitem[Fr]{FrieM}
A.~Friedman,
\textit{Foundations of Modern Analysis},\hsm
Dover, New York, 1970.
%
\bibitem[G]{GotRF}
D.H.~Gottlieb,
``Robots and fibre bundles'',
\textit{Bull.\ Soc.\ Math.\ Belg.\ S{\'{e}}r.\ A} \textbf{38} (1986),
219-223.
%
\bibitem[Hal]{HallK}
A.S.~Hall, Jr.,
\textit{Kinematics and Linkage Design},
Prentice-Hall, Englewood Cliffs, NJ, 1961.
%
\bibitem[Hau]{HausT}
J.-C.~Hausmann,
``Sur la topologie des bras articul{\'{e}}s'',\hsm
in S.~Jackowski, R.~Oliver, \& K.~Pawa{\l}owski, eds.,
\textit{Algebraic Topology - Pozn{\'{a}}n 1989}, Lect.\ Notes Math.\
\textbf{1474}, Springer Verlag, Berlin-\-New-York, 1991, pp.~146-159.
%
\bibitem[HK]{HKnutC}
J.-C.~Hausmann \& A.~Knutson,
``The cohomology ring of polygon spaces'',
\textit{Ann.~Inst.~Fourier (Grenoble)} \textbf{48} (1998), pp.~281-321.
%
\bibitem[Ho]{HolcM}
M.~Holcomb,
``On the Moduli Space of Multipolygonal Linkages in the Plane'',
\textit{Topology \& Applic.} \textbf{154} (2007), pp.~124-143.
%
\bibitem[JS]{JSteiC}
D.~Jordan \& M.~Steiner,
``Compact surfaces as configuration spaces of mechanical linkages'',
\textit{Israel J.\ Math.} \textbf{122} (2001), pp.~175-187.
%
\bibitem[KM1]{KMillM}
M.~Kapovich \& J.~Millson,
``On the moduli space of polygons in the Euclidian plane'',\hsm
\textit{J.\ Diff.\ Geom.} \textbf{42} (1995), pp.~430-464.
%
\bibitem[KM2]{KMillS}
M.~Kapovich \& J.~Millson,
``The symplectic geometry of polygons in Euclidean space'',\hsm
\textit{J.\ Diff.\ Geom.} \textbf{44} (1996), pp.~479ג€"513.
%
\bibitem[K]{KamiT}
Y.~Kamiyama,
``Topology of equilateral polygon linkages in the Euclidean
                         plane modulo isometry group'',\hsm
\textit{Osaka J.~Math.} \textbf{36} (1999), pp.~731-745.
%
\bibitem[KT]{KTsuC}
Y.~Kamiyama \& S.~Tsukuda,
``The configuration space of the $n$-arms machine in the Euclidean space'',
\textit{Topology \& Applic.} \textbf{154} (2007), pp.~1447-1464.
%
\bibitem[L]{LeeRM}
J.M.~Lee,
\textit{Riemannian manifolds. An introduction to curvature},\hsm
Springer-\-Verlag, Berlin-\-New York, 1997.
%
\bibitem[Me]{MerlP}
J.~P.~Merlet,
\textit{Parallel Robots},\hsm
Kluwer Academic Publishers, Dordrecht, 2000.
%
\bibitem[MT]{MTrinG}
R.J.~Milgram \& J.~Trinkle,
``The Geometry of Configuration spaces
of Closed Chains in Two and Three Dimensions'',\hsm
\textit{Homology, Homotopy \& Applic.} \textbf{6} (2004), pp.\ 237-267.
%
\bibitem[Mu]{MunkrTF}
J.R.~Munkres,
\textit{Topology, A First Course},\hsm
Prentice-Hall, Englewood, NJ, 1975.
%
\bibitem[OH]{OHaraM}
J.~O'Hara,
``The configuration space of planar spidery linkages'',
\textit{Topology \& Applic.} \textbf{154} (2007), pp.~502-526.
%
\bibitem[P]{PennC}
R.J.~Penne,
``Configurations of few lines in $3$-space: Isotopy, chirality and
planar layouts'',\hsm
\textit{Geom.\ Dedicata} \textbf{45} (1993), pp.~49–82.
%
\bibitem[RR]{RRimoI}
G.~Rodnay \& E.~Rimon,
``Isometric visualization of configuration spaces of two-degrees-of-freedom
mechanisms'',\hsm
\textit{Mechanism and machine theory} \textbf{36} (2001), pp.~523-545.
%
\bibitem[Se]{SeliG}
J.M.~Selig,
\textit{Geometric Fundamentals of Robotics},\hsm
Springer-\-Verlag \textit{Mono.\ Comp.\ Sci.}, Berlin-\-New York, 2005.
%
\bibitem[Sh]{ShafB1}
I.R.~Shafarevich,
\textit{Basic algebraic geometry, 1. Varieties in projective space},
Springer-\-Verlag, Berlin-\-New York, 1994.
%
\bibitem[SSB]{SSB}
N.~Shvalb, M.~Shoham \& D.~Blanc,
``The Configuration Space of Arachnoid Mechanisms'',
\textit{Fund.\ Math.} \textbf{17} (2005), 1033-1042.
%
\bibitem[T]{TsaiR}
L.W.~Tsai,
\textit{Robot Analysis - The mechanics of serial and parallel manipulators},\hsm
Wiley interscience Publication - John Wiley \& Sons, New York, 1999.
%
\bibitem[Va]{VassK}
V.A.~Vassiliev,
``Knot invariants and singularity theory'',
in \textit{Singularity theory (Trieste, 1991)}, World Sci.\ Publ., River Edge,
NJ, 1995, pp.~904-919.
%
\bibitem[Vi]{ViroT}
O.Ya.~Viro,
``Topological problems on lines and points of three-dimensional space'',\hsm
\textit{Dokl.\ Akad.\ Nauk SSSR} \textbf{284} (1985), pp.~1049–1052.
%
\end{thebibliography}
\end{document}